\newcommand*{\mailto}[1]{\href{mailto:#1}{\nolinkurl{#1}}}
\def\theequation{\@arabic\c@equation}
\newcommand{\bbN}{{\mathbb{N}}}
\newcommand{\bbR}{{\mathbb{R}}}
\newcommand{\bbZ}{{\mathbb{Z}}}
\newcommand{\bbC}{{\mathbb{C}}}
\newcommand{\cB}{{\mathcal B}}
\newcommand{\cD}{{\mathcal D}}
\newcommand{\cF}{{\mathcal F}}
\newcommand{\cH}{{\mathcal H}}
\newcommand{\cK}{{\mathcal K}}
\newcommand{\cN}{{\mathcal N}}
\newcommand{\no}{\nonumber}
\newcommand{\lb}{\label}
\newcommand{\f}{\frac}
\newcommand{\ol}{\overline}
\newcommand{\wti}{\widetilde}
\newcommand{\Oh}{O}
\newcommand{\tr}{\text{\rm{tr}}}
\newcommand{\ran}{\text{\rm{ran}}}
\newcommand{\ind}{\text{\rm{ind}}}
\newcommand{\de}{\text{\rm{def}}}
\newcommand{\dom}{\text{\rm{dom}}}
\newcommand{\bi}{\bibitem}
\numberwithin{equation}{section}
\newtheorem{theorem}{Theorem}[section]
\newtheorem{lemma}[theorem]{Lemma}
\newtheorem{definition}[theorem]{Definition}
\newtheorem{hypothesis}[theorem]{Hypothesis}
\theoremstyle{remark}
\newtheorem{remark}[theorem]{Remark}
\begin{document}

\title[On the Index of Meromorphic Operator-Valued Functions]{On the Index of Meromorphic 
Operator-Valued Functions and Some Applications}

\author[J.\ Behrndt]{Jussi Behrndt}  
\address{Institut f\"ur Numerische Mathematik, Technische Universit\"at 
Graz, Steyrergasse 30, 8010 Graz, Austria}  
\email{\mailto{behrndt@tugraz.at}}
\urladdr{\url{http://www.math.tugraz.at/~behrndt/}}

\author[F.\ Gesztesy]{Fritz Gesztesy} 
\address{Department of Mathematics,
University of Missouri, Columbia, MO 65211, USA}
\email{\mailto{gesztesyf@missouri.edu}}
\urladdr{\url{https://www.math.missouri.edu/people/gesztesy}}

\author[H.\ Holden]{Helge Holden}
\address{Department of Mathematical Sciences,
Norwegian University of
Science and Technology, NO--7491 Trondheim, Norway}
\email{\mailto{holden@math.ntnu.no}}
\urladdr{\href{http://www.math.ntnu.no/~holden/}{http://www.math.ntnu.no/\~{}holden/}}

\author[R.\ Nichols]{Roger Nichols}
\address{Mathematics Department, The University of Tennessee at Chattanooga, 
415 EMCS Building, Dept. 6956, 615 McCallie Ave, Chattanooga, TN 37403, USA}
\email{\mailto{Roger-Nichols@utc.edu}}
\urladdr{\url{http://www.utc.edu/faculty/roger-nichols/index.php}}

\dedicatory{Dedicated with great pleasure to Pavel Exner at the occasion of his 
70th birthday.}
\date{\today}
\thanks{ J.B.\ and F.G.\ gratefully acknowledge support by the Austrian Science Fund (FWF), project
P 25162-N26. F.G.\ and H.H.\ were supported in part by the project ``Waves and Nonlinear Phenomena (WaNP)'' from the Research Council of Norway. 
R.N.\ gratefully acknowledges support from a UTC College of Arts and Sciences RCA Grant.} 
\thanks{To appear in {\it Functional Analysis and Operator Theory for Quantum Physics}, 
J.\ Dittrich, H.\ Kovarik, and A.\ Laptev (eds.), EMS Publishing House, EMS, ETH--Z\"urich, Switzerland.}
\subjclass[2010]{Primary: 47A10, 47A53, 47A56, 47A75. Secondary: 47A55, 47B10.}
\keywords{Factorization of operator-valued analytic functions, multiplicity of eigenvalues, 
index computations for finitely meromorphic operator-valued functions, Birman--Schwinger 
operators, dual pairs.}

\begin{abstract}
We revisit and connect several notions of algebraic multiplicities of zeros of analytic 
operator-valued functions and discuss the concept of the index of meromorphic 
operator-valued functions in complex, separable Hilbert spaces. Applications to abstract 
perturbation theory and associated Birman--Schwinger-type operators and to the 
operator-valued Weyl--Titchmarsh functions associated to closed extensions of dual pairs 
of closed operators are provided. 
\end{abstract}

\maketitle


{\scriptsize{\tableofcontents}}

\section{Introduction} \lb{s1}

{\it We dedicate this paper with great pleasure to Pavel Exner, whose tireless efforts as an 
ambassador for Mathematical Physics have led him to nearly every corner of this globe. 
Happy Birthday, Pavel, we hope our modest contribution to operator and spectral theory 
will cause some joy.} 

\smallskip 

The purpose of this paper is fourfold: 

$\bullet$ First, to recall recent results on factorizations of analytic 
operator-valued Fredholm functions following Howland \cite{Ho71} and more recently, 
\cite{GHN15}. 

$\bullet$ Second, apply this to algebraic multiplicities of bounded, analytic operator-valued Fredholm functions.

$\bullet$ Third, discuss the notion of an index of meromorphic operator-valued functions. 

$\bullet$ Fourth, apply this to Birman--Schwinger operators in connection with abstract perturbation theory and to operator-valued Weyl--Titchmarsh functions associated to closed extensions of dual 
pairs of closed operators.  

More precisely, in Section \ref{s2} we recall the notion of finitely-meromorphic $\cB(\cH)$-valued 
functions and some of their basic properties, state the analytic Fredholm theorem, and recall in Theorems \ref{t2.5} and \ref{t2.6} a 
factorization of analytic operator-valued Fredholm functions originally due to Howland \cite{Ho71} 
and recently revisited under somewhat more general hypotheses in \cite{GHN15}.  

Section \ref{s3} recalls the notion of zeros of finite-type of bounded, analytic operator-valued 
functions $A(\cdot)$, revisits the algebraic multiplicity \eqref{3.6j} of a zero of finite-type of 
$A(\cdot)$, relates the latter to the operator-valued argument principle (i.e., an 
operator Rouch\'e-type Theorem) and to appropriate traces of contour integrals, and finally proves equality of this notion of multiplicity with the multiplicity notion \eqref{2.31j} originally introduced by Howland \cite{Ho71} in Theorem \ref{t3.4a}, the principal result of this section. 

The topic of meromorphic operator-valued functions and the notion of their index is the principal 
subject of Section \ref{s4}. In particular, we revisit the notion of $\cB(\cH)$-valued finitely 
meromorphic functions $M(\cdot)$, introduce the notion of their index via the operator-valued argument 
principle and taking the trace of a contour integral as in \eqref{4.3j}, and finally recall the meromorphic Fredholm theorem.    

Abstract perturbation theory and applications to Birman--Schwinger-type operators $K(\cdot)$ 
are treated in Section \ref{s5}. This should be viewed as a refinement of recent results of this 
genre in \cite[Sect.~5]{GHN15}. Following Kato \cite{Ka66}, Konno and Kuroda \cite{KK66}, and Howland \cite{Ho70}, we recall a class of factorable non-self-adjoint perturbations of a 
given unperturbed non-self-adjoint operator $H_0$, giving rise to an operator $H$ as refined in 
\cite{GLMZ05} (cf.\ Theorem \ref{t5.2}), and then prove analogs of Weinstein--Aronszajn formulas, relating the difference of the algebraic multiplicity of 
an eigenvalue of $H$ and $H_0$ to the index of 
the meromorphic operator-valued function $I - K(\cdot)$ in Theorem \ref{t5.5}. 
 
Our final Section \ref{s6} focuses on closed extensions $A_0$, $A_{\Theta}$ ($\Theta$ an 
appropriate bounded operator parameter), associated to dual pairs $\{A,B\}$ of operators and their associated Weyl--Titchmarsh functions $M(\cdot)$, following work of Malamud, Mogilevskii, and Hassi \cite{MM02}, \cite{MM03}, \cite{MMH12}. Our principal new result, Theorem \ref{t6.4}, relates the difference of the algebraic multiplicity of a discrete eigenvalue of $A_{\Theta}$ and $A_0$ to the 
index of the meromorphic operator-valued function $\Theta - M(\cdot)$. 

Next, we summarize the basic notation used in this paper: Let $\cH$ and $\cK$
be separable complex Hilbert spaces, $(\,\cdot\,,\,\cdot\,)_{\cH}$ and
$(\,\cdot\,,\,\cdot\,)_{\cK}$ the scalar products in $\cH$ and $\cK$ (linear in
the second factor), and
$I_{\cH}$ and $I_{\cK}$ the identity operators in $\cH$ and $\cK$,
respectively. Next, let $T$ be a closed linear operator from
$\dom(T)\subseteq\cH$ to $\ran(T)\subseteq\cK$, with $\dom(T)$
and $\ran(T)$ denoting the domain and range of $T$. The closure of a
closable operator $S$ is denoted by $\ol S$. The kernel (null space) of $T$
is denoted by $\ker(T)$. The spectrum, point spectrum, and resolvent set of a closed linear 
operator in $\cH$ will be denoted by $\sigma(\cdot)$, $\sigma_p(\cdot)$, and $\rho(\cdot)$; the 
discrete spectrum of $T$ (i.e., points in $\sigma_p(T)$ which are isolated from the rest of 
$\sigma(T)$, and which are eigenvalues of $T$ of finite algebraic multiplicity) is 
abbreviated by $\sigma_d(T)$. The {\it algebraic multiplicity} $m_a(z_0; T)$ of an eigenvalue 
$z_0\in\sigma_d(T)$ is the dimension of the range of the corresponding {\it Riesz projection} 
$P(z_0;T)$,  
\begin{equation}
m_a(z_0; T) = \dim(\ran(P(z_0;T))) = \tr_{\cH}(P(z_0;T)),
\end{equation}
where (with the symbol $ \ointctrclockwise$ denoting 
contour integrals)
\begin{equation}
 P(z_0;T)=\f{-1}{2\pi i} \ointctrclockwise_{C(z_0;\varepsilon)} d\zeta \, 
(T - \zeta I_{\cH})^{-1}, 
\end{equation}
for $0 < \varepsilon<\varepsilon_0$ and  $D(z_0;\varepsilon_0) \backslash \{z_0\}\subset \rho(T)$; here
$D(z_0; r_0) \subset \bbC$ is the open disk with center $z_0$ and radius 
$r_0 > 0$, and $C(z_0; r_0) = \partial D(z_0; r_0)$ the corresponding circle. 
The  {\it geometric multiplicity} $m_g(z_0; T)$ of an eigenvalue  
$z_0 \in \sigma_p(T)$ is defined by 
\begin{equation}
m_g(z_0; T) = \dim(\ker((T - z_0 I_{\cH}))).  
\end{equation}
The essential spectrum of $T$ is defined by $\sigma_{ess}(T) = \sigma(T)\backslash \sigma_d(T)$.

The Banach spaces of bounded
and compact linear operators in $\cH$ are denoted by $\cB(\cH)$ and
$\cB_\infty(\cH)$, respectively. Similarly, the Schatten--von Neumann
(trace) ideals will subsequently be denoted by $\cB_p(\cH)$,
$p \in [1,\infty)$, and the subspace of all finite rank operators in $\cB_1(\cH)$ will be 
abbreviated by $\cF(\cH)$. Analogous notation $\cB(\cH_1,\cH_2)$,
$\cB_\infty (\cH_1,\cH_2)$, etc., will be used for bounded, compact, etc.,
operators between two Hilbert spaces $\cH_1$ and $\cH_2$. In addition,
$\tr_{\cH}(T)$ denotes the trace of a trace class operator $T\in\cB_1(\cH)$.

The set of bounded Fredholm operators on $\cH$ (i.e., the set of operators $T \in \cB(\cH)$ 
such that $\dim(\ker(T)) < \infty$, $\ran(T)$ is closed in $\cH$, and $\dim(\ker(T^*)) < \infty$) 
is denoted by the symbol $\Phi(\cH)$. The corresponding (Fredholm) index 
of $T \in \Phi(\cH)$ is then given by $\ind(T) = \dim(\ker(T)) - \dim(\ker(T^*))$. For a linear 
operator $S$ in $\cH$ with closed range one defines the 
{\it defect of $S$}, denoted by $\de(S)$, by the codimension of $\ran(S)$ in 
$\cH$, that is, 
\begin{equation}
\de(S) = \dim\big(\ran(S)^{\bot}\big). 
\end{equation}

The symbol $\dotplus$ denotes a direct (but not necessary orthogonal direct) decomposition 
in connection with subspaces of Banach spaces. Finally, we find it convenient to abbreviate 
$\bbN_0 = \bbN \cup \{0\}$.

\section{On Factorizations of Analytic Operator-Valued Functions} \lb{s2}

In this section, we recall factorizations of bounded, analytic operator-valued Fredholm functions following Howland \cite{Ho71} and more recently, \cite{GHN15}.

Assuming $\Omega \subseteq \bbC$  to be open and $M(\cdot)$ to be a $\cB(\cH)$-valued 
meromorphic function on $\Omega$ that has the norm convergent Laurent expansion around 
$z_0 \in \Omega$ of the type 
\begin{align}
\begin{split} 
M(z) = \sum_{k= - N_0}^\infty (z - z_0)^{k} M_k(z_0), \quad M_k(z_0) \in \cB(\cH), \, k \in \bbZ, 
\; k \geq - N_0,&       \\  
 0 < |z - z_0| < \varepsilon_0,&      \lb{2.1}
 \end{split} 
\end{align}
for some $N_0=N_0(z_0) \in \bbN$ and some $0 < \varepsilon_0=\varepsilon_0(z_0)$ sufficiently 
small, we denote the principal part, ${\rm pp}_{z_0} \, \{M(\cdot)\}$, of $M(\cdot)$ at $z_0$ by
\begin{align} 
& {\rm pp}_{z_0} \, \{M(z)\} = \sum_{k= - N_0}^{-1} (z - z_0)^{k} M_k(z_0), \quad 
M_k(z_0) \in \cB(\cH), \;  -N_0 \leq k \leq -1,    \no \\
& \hspace*{8.2cm} 0 < |z - z_0| < \varepsilon_0.    \lb{2.2}
\end{align}

Given the notation \eqref{2.2}, we start with the following definition.

\begin{definition} \label{d2.1}
Let $\Omega\subseteq\bbC$ be open and connected. Suppose that $M(\cdot)$ is a 
$\cB(\cH)$-valued analytic function on $\Omega$ except for isolated singularities in a neighborhood of which it is meromorphic. Then  
$M(\cdot)$ is called {\it finitely meromorphic at $z_0 \in\Omega$} if $M(\cdot)$ is 
analytic on the punctured disk $D(z_0;\varepsilon_0)\backslash\{z_0\} \subset \Omega$ 
centered at $z_0$ with sufficiently small $\varepsilon_0 > 0$, and the principal part of $M(\cdot)$ at $z_0$ is of finite rank, that is, the principal part of $M(\cdot)$ is of the type \eqref{2.2}, and one has 
\begin{equation}\label{2.3j}
M_k(z_0) \in \cF(\cH), \quad -N_0 \leq k \leq -1. 
\end{equation}
In addition, $M(\cdot)$ is called {\it finitely meromorphic on $\Omega$} if it is meromorphic 
on $\Omega$ and finitely meromorphic at each of its poles.
\end{definition}

In this context, we mention the following useful result:

\begin{lemma} [{\cite[Lemma\ XI.9.3]{GGK90}}, {\cite[Proposition\ 4.2.2]{GL09}}] \label{l2.2} 
Let $\Omega\subseteq\bbC$ be open and connected and $M_j(\cdot)$, $j=1,2$, be 
$\mathcal B(\mathcal H)$-valued 
finitely meromorphic functions at $z_0 \in \Omega$. Then 
$M_1(\cdot) M_2(\cdot)$ and $M_2(\cdot) M_1(\cdot)$ are finitely meromorphic at 
$z_0 \in \Omega$, and for $0 <\varepsilon< \varepsilon_0$ sufficiently small, 
\begin{equation}\label{2.4j1}
\ointctrclockwise_{C(z_0;\varepsilon)} d \zeta \, M_1(\zeta) M_2(\zeta)   \in \cF(\cH)
\end{equation}
and 
\begin{equation}\label{2.4j2}
\ointctrclockwise_{C(z_0;\varepsilon)} d \zeta \, M_2(\zeta) M_1(\zeta) \in \cF(\cH),
\end{equation}
and the identity
\begin{equation}\label{2.5j}
 {\tr}_{\cH} \bigg(\ointctrclockwise_{C(z_0;\varepsilon)} d \zeta \, 
M_1(\zeta) M_2(\zeta)\bigg) 
= {\tr}_{\cH} \bigg(\ointctrclockwise_{C(z_0;\varepsilon)} d \zeta \, 
M_2(\zeta) M_1(\zeta)\bigg)
\end{equation}
holds. Moreover, for $0 < |z-z_0| < \varepsilon_0$ one has
\begin{equation}\label{2.6j}
 {\tr}_{\cH} \big({\rm pp}_{z_0} \, \{M_1(z) M_2(z)\}\big) = 
{\tr}_{\cH} \big({\rm pp}_{z_0} \, \{M_2(z) M_1(z)\}\big).
\end{equation}
\end{lemma}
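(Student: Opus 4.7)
The plan is to compute the Cauchy product of the Laurent expansions of $M_1$ and $M_2$ around $z_0$ and then exploit cyclicity of the trace for products in which one factor is finite rank. First I would expand $M_j(z) = \sum_{k \geq -N_j} (z - z_0)^k M_{j,k}$ for $j = 1, 2$, where by Definition~\ref{d2.1} one has $M_{j,k} \in \cF(\cH)$ for $-N_j \leq k \leq -1$ and $M_{j,k} \in \cB(\cH)$ for $k \geq 0$. Multiplying the two series yields
\begin{equation*}
M_1(z) M_2(z) = \sum_{n \geq -N_1 - N_2} (z - z_0)^n C_n^{(12)}, \qquad C_n^{(12)} = \sum_{k + l = n} M_{1,k} M_{2,l},
\end{equation*}
and analogously $C_n^{(21)} = \sum_{k + l = n} M_{2,l} M_{1,k}$. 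For each $n \leq -1$, every pair $(k,l)$ with $k + l = n$ satisfies $\min(k,l) < 0$, so at least one factor of $M_{1,k} M_{2,l}$ lies in $\cF(\cH)$; the product, and hence the finite sum $C_n^{(12)}$, lies in $\cF(\cH)$. This establishes the finite meromorphicity of $M_1 M_2$ at $z_0$, and by symmetry the same holds for $M_2 M_1$.

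Next, for $0 < \varepsilon < \varepsilon_0$ the residue theorem gives
\begin{equation*}
\ointctrclockwise_{C(z_0;\varepsilon)} d \zeta \, M_1(\zeta) M_2(\zeta) = -2\pi i\, C_{-1}^{(12)} \in \cF(\cH),
\end{equation*}
which proves \eqref{2.4j1}, and \eqref{2.4j2} follows in the same way. For the trace identity \eqref{2.5j}, I would invoke the cyclicity $\tr_{\cH}(AB) = \tr_{\cH}(BA)$ valid whenever $A \in \cB_1(\cH)$ and $B \in \cB(\cH)$: since each summand $M_{1,k} M_{2,l}$ with $k + l = -1$ contains a finite rank (hence trace class) factor, one has $\tr_{\cH}(M_{1,k} M_{2,l}) = \tr_{\cH}(M_{2,l} M_{1,k})$, and summing over $k + l = -1$ gives $\tr_{\cH}\bigl(C_{-1}^{(12)}\bigr) = \tr_{\cH}\bigl(C_{-1}^{(21)}\bigr)$. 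Multiplication by $-2\pi i$ then produces \eqref{2.5j}.

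Finally, for \eqref{2.6j} the same cyclicity argument applied coefficient by coefficient yields $\tr_{\cH}\bigl(C_n^{(12)}\bigr) = \tr_{\cH}\bigl(C_n^{(21)}\bigr)$ for every $-N_1 - N_2 \leq n \leq -1$; multiplying by $(z - z_0)^n$ and summing over the finite range of negative indices gives the identity for the principal parts. The only point requiring a little care is the justification of trace cyclicity in each summand, but the finite meromorphicity hypothesis places every negative-index Laurent coefficient into $\cF(\cH) \subset \cB_1(\cH)$, so the cyclicity is automatic and no serious obstacle arises beyond bookkeeping.
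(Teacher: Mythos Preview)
Your argument is correct and is precisely the standard proof one finds in the references the paper cites; note that the paper itself does not supply a proof of this lemma but simply quotes it from \cite[Lemma~XI.9.3]{GGK90} and \cite[Proposition~4.2.2]{GL09}. One harmless slip: the counterclockwise contour integral equals $2\pi i\, C_{-1}^{(12)}$, not $-2\pi i\, C_{-1}^{(12)}$, but since the same constant appears on both sides this does not affect \eqref{2.5j}.
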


For the remainder of this section we make the following assumptions:

\begin{hypothesis} \lb{h2.3} 
Let $\Omega \subseteq \bbC$ be open and connected, suppose that 
$A:\Omega \to \cB(\cH)$ is analytic and that 
\begin{equation}
A(z) \in \Phi(\cH) \, \text{ for all } \, z\in\Omega.    \lb{2.8j} 
\end{equation}  
\end{hypothesis}

One then recalls the analytic Fredholm theorem in the following form:

\begin{theorem} [{\cite[Sect.~4.1]{GL09}}, \cite{GS71}, \cite{Ho70}, {\cite[Thm.~VI.14]{RS80}}, \cite{St68}] \label{t2.4}  ${}$ \\
Assume that $A:\Omega \to \cB(\cH)$ satisfies Hypothesis \ref{h2.3}. Then either \\[1mm] 
$(i)$ $A(z)$ is not boundedly invertible for any $z \in \Omega$, \\[1mm]
or else, \\[1mm]
$(ii)$ $A(\cdot)^{-1}$ is finitely meromorphic on $\Omega$. More precisely, there exists a discrete 
subset $\cD_1 \subset \Omega$ $($possibly, $\cD_1 = \emptyset$$)$ such that 
$A(z)^{-1} \in \cB(\cH)$ $($and hence lies in $\Phi(\cH)$$)$ for all $z \in \Omega\backslash\cD_1$, $A(\cdot)^{-1}$ is analytic on 
$\Omega\backslash\cD_1$, meromorphic on $\Omega$, and if $z_1 \in \cD_1$ then 
\begin{equation}\label{2.9j}
A(z)^{-1} = \sum_{k= - N_0}^{\infty} (z - z_1)^{k} C_k(z_1), \quad 
0 < |z - z_1| < \varepsilon_0,    
\end{equation}
for some $N_0=N_0(z_1) \in \bbN$ and some $0 < \varepsilon_0=\varepsilon_0(z_1)$ sufficiently 
small,
with  
\begin{equation}\lb{2.10j}  
C_k(z_1) \in \cF(\cH), \; -N_0 \leq k \leq -1,  \quad  C_k(z_1) \in \cB(\cH), \; k \in \bbN_0. 
\end{equation}
In addition, 
\begin{equation}\label{2.10jfredholm}
  C_0(z_1) \in \Phi(\cH).
\end{equation}
Finally, if $[I_{\cH} - A(z)] \in \cB_{\infty}(\cH)$ for all $z \in \Omega$, then 
\begin{equation}\label{2.11j}
\big[I_{\cH} - A(z)^{-1}\big] \in \cB_{\infty}(\cH), \; z \in \Omega\backslash\cD_1, \quad
[I_{\cH} - C_0(z_1)] \in \cB_{\infty}(\cH),\; z_1 \in \cD_1.
\end{equation}
\end{theorem}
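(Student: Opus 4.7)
My plan is to prove Theorem \ref{t2.4} by local reduction to a finite-dimensional problem, then to promote the local dichotomy to a global one via connectedness of $\Omega$, and finally to read off the Laurent structure at each singular point from the finite-rank reduction.

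\textbf{Step 1 (local factorization).} Fix $z_0 \in \Omega$. Since $A(z_0) \in \Phi(\cH)$, there is a finite-rank operator $K_0 \in \cF(\cH)$ such that $A(z_0) + K_0$ is boundedly invertible in $\cH$; indeed, one may construct $K_0$ as the composition of a projection onto $\ker(A(z_0))$ with a linear isomorphism onto a complement of $\ran(A(z_0))$. By continuity of $A(\dott)$ and the openness of the invertible group in $\cB(\cH)$, the operator $B(z) := A(z) + K_0$ is boundedly invertible on some disk $D(z_0; \varepsilon_0) \subset \Omega$, with $B(\dott)^{-1}$ analytic there (Neumann series). Writing
\begin{equation*}
 A(z) = B(z)\bigl[I_{\cH} - F(z)\bigr], \qquad F(z) := B(z)^{-1} K_0,
\end{equation*}
I obtain an analytic family $F(\dott)$ of finite rank operators with $\ran(F(z)) \subseteq \ran(K_0) =: \cN$, a fixed finite-dimensional subspace.

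\textbf{Step 2 (reduction to a scalar analytic function).} Let $P_0$ denote a projection onto $\cN$. Then $I_{\cH} - F(z)$ is boundedly invertible iff the matrix $I_{\cN} - P_0 F(z) P_0\big|_{\cN}$ is invertible on $\cN$, iff the analytic scalar function
\begin{equation*}
 d(z) := \det\bigl(I_{\cN} - P_0 F(z) P_0\big|_{\cN}\bigr)
\end{equation*}
is nonzero. On $D(z_0;\varepsilon_0)$, either $d \equiv 0$ (so $A(z)$ is nowhere invertible locally), or the zero set of $d$ is a discrete subset of $D(z_0;\varepsilon_0)$.

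\textbf{Step 3 (global dichotomy via connectedness).} The set $U \subseteq \Omega$ on which $A(\dott)$ is boundedly invertible in some open neighborhood of the point is open. The set on which alternative (i) holds locally—meaning $d \equiv 0$ on a local disk—is also open by Step 2. These two sets cover $\Omega$ and are disjoint, and $\Omega$ is connected; therefore exactly one alternative holds globally. In case (ii), define $\cD_1 \subset \Omega$ as the set where $A(z)$ fails to be boundedly invertible; this set is discrete by Step 2, and on $\Omega \setminus \cD_1$ the inverse $A(\dott)^{-1} = [I_{\cH} - F(\dott)]^{-1} B(\dott)^{-1}$ is analytic.

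\textbf{Step 4 (Laurent expansion at $z_1 \in \cD_1$).} Apply the local factorization of Step 1 around $z_1$. Inverting $I_{\cN} - P_0 F(z) P_0\big|_{\cN}$ by Cramer's rule produces a meromorphic matrix-valued function on $\cN$ with pole of finite order $N_0$ at $z_1$; extending by $I_{\cH}$ on the complement of $\cN$ gives $[I_{\cH} - F(z)]^{-1}$ as a finitely meromorphic $\cB(\cH)$-valued function whose singular coefficients lie in $\cF(\cH)$ (their range sits inside $\cN$). Multiplying by the analytic invertible factor $B(z)^{-1}$ preserves this structure (a finite-rank operator times a bounded one is finite-rank), giving \eqref{2.9j}--\eqref{2.10j}. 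For \eqref{2.10jfredholm}, expand $A(z) A(z)^{-1} = I_{\cH}$ as Laurent series at $z_1$ and read off the constant term: $A(z_1) C_0(z_1) = I_{\cH} - \sum_{j \geq 1} A_j(z_1) C_{-j}(z_1)$, where $A(z) = \sum_{j \geq 0}(z-z_1)^j A_j(z_1)$. The correction term on the right lies in $\cF(\cH)$ by \eqref{2.10j}, so $A(z_1) C_0(z_1) \in I_{\cH} + \cF(\cH)$, and symmetrically $C_0(z_1) A(z_1) \in I_{\cH} + \cF(\cH)$. Since $A(z_1) \in \Phi(\cH)$ and Fredholm operators form an open set stable under finite-rank perturbation, and Fredholmness is preserved under the algebraic identity $C_0(z_1) A(z_1), A(z_1) C_0(z_1) \in \Phi(\cH)$, one concludes $C_0(z_1) \in \Phi(\cH)$.

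\textbf{Step 5 (compact case \eqref{2.11j}).} If $I_{\cH} - A(z) \in \cB_{\infty}(\cH)$ for all $z \in \Omega$, then for $z \in \Omega \setminus \cD_1$,
\begin{equation*}
 I_{\cH} - A(z)^{-1} = A(z)^{-1}\bigl[A(z) - I_{\cH}\bigr] \in \cB_{\infty}(\cH),
\end{equation*}
since the ideal $\cB_{\infty}(\cH)$ absorbs $\cB(\cH)$ on either side. For $z_1 \in \cD_1$, write $C_0(z_1) = \frac{1}{2\pi i} \ointctrclockwise_{C(z_1;\varepsilon)} (\zeta - z_1)^{-1} A(\zeta)^{-1} d\zeta$ (the constant-term formula for a Laurent series); subtracting the analogous representation of $I_{\cH} = \frac{1}{2\pi i} \ointctrclockwise_{C(z_1;\varepsilon)} (\zeta - z_1)^{-1} I_{\cH} d\zeta$ yields $I_{\cH} - C_0(z_1)$ as a norm-convergent contour integral of compact operators, hence compact.

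The step I expect to be the main obstacle is Step 4: controlling the Laurent coefficients through the Cramer's-rule inversion on $\cN$ and then verifying the Fredholm property of $C_0(z_1)$ cleanly from the coefficient identities, while ensuring that $F(z)$ has been set up with $\ran(F(z))$ lying in a fixed finite-dimensional subspace so that the finite-rank assertion in \eqref{2.10j} is truly uniform in $z$.
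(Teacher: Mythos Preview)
The paper does not supply its own proof of Theorem \ref{t2.4}; it is recalled from the literature (the citations in the header). Your overall strategy---local finite-rank reduction, determinant dichotomy, connectedness, and Cramer's rule for the Laurent expansion---is exactly the classical route taken in those references, so there is nothing to compare against.

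That said, two steps in your write-up contain genuine (though easily repaired) errors.

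\emph{Step 1/2.} With $B(z)=A(z)+K_0$ and $F(z)=B(z)^{-1}K_0$ you assert $\ran(F(z))\subseteq \ran(K_0)=:\cN$. This is false: $\ran(F(z))=B(z)^{-1}\ran(K_0)$ moves with $z$. Consequently the reduction to the fixed block $I_{\cN}-P_0F(z)P_0|_{\cN}$ in Step~2 is not justified as written, and the ``fixed subspace'' assertion you flag as delicate in your closing paragraph indeed fails for this choice of factorization. The fix is to factor on the other side, $A(z)=[I_{\cH}-\wti F(z)]B(z)$ with $\wti F(z)=K_0B(z)^{-1}$, for which $\ran(\wti F(z))\subseteq\ran(K_0)$ genuinely holds; then Steps~2 and~4 go through verbatim with $A(z)^{-1}=B(z)^{-1}[I_{\cH}-\wti F(z)]^{-1}$.

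\emph{Step 3.} Your two open sets do not cover $\Omega$: an isolated zero $z_1$ of the local determinant lies in neither (it is not a point where $A$ is invertible on a full neighborhood, nor a point where $d\equiv 0$ locally). The clean argument is to show that $\mathrm{int}(S)$, with $S=\{z\in\Omega: A(z)\text{ not invertible}\}$, is both open and closed in $\Omega$: if $z_0\in\ol{\mathrm{int}(S)}$, the local dichotomy forces the ``$d\equiv 0$'' alternative on $D(z_0;\varepsilon_0)$ (the discrete alternative has empty interior), whence $z_0\in\mathrm{int}(S)$. Connectedness then gives $\mathrm{int}(S)\in\{\emptyset,\Omega\}$, which is the desired global dichotomy.

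Steps~4 and~5 are fine once Step~1 is corrected; in particular your coefficient-matching argument for $C_0(z_1)\in\Phi(\cH)$ is the standard Atkinson-type reasoning (both $A(z_1)C_0(z_1)$ and $C_0(z_1)A(z_1)$ are finite-rank perturbations of $I_{\cH}$, hence $C_0(z_1)$ is Fredholm).
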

  
The following fundamental results are due to Howland \cite{Ho71} (see also 
\cite{GHN15} for more general hypotheses, replacing Howland's assumption that 
$[A(\cdot) - I_{\cH}] \in \cB_{\infty}(\cH)$ by the assumption that $A(\cdot)$ is 
Fredholm): 

\begin{theorem} [\cite{Ho71}] \lb{t2.5}
Assume that $A:\Omega \to \cB(\cH)$ satisfies Hypothesis \ref{h2.3}, suppose that $A(z)$ is boundedly invertible for 
some $z \in \Omega$ $($i.e., case $(ii)$ in Theorem \ref{t2.4} applies\,$)$, and let 
$z_0 \in \Omega$ be a pole of $A(\cdot)^{-1}$ of order $n_0 \in \bbN$. Denote by $Q_1$ any 
projection onto $\ran(A(z_0))$ and let $P_1 = I_{\cH} - Q_1$. Then,
\begin{equation}
A(z) = [Q_1 - (z-z_0) P_1] A_1(z), \quad z \in \Omega,     \lb{2.12j} 
\end{equation}
where 
\begin{align} 
& \text{$A_1(\cdot)$ is analytic on $\Omega$,} \lb{2.13j}\\
& A_1(z) \in \Phi(\cH), \quad z\in\Omega,    \lb{2.14j} \\ 
& \ind(A(z)) = \ind(A_1(z)) =0, \quad z \in \Omega, \; |z - z_0| \, \text{ sufficiently small,}   \lb{2.15j} \\ 
& \de(A_1(z_0)) \leq \de(A(z_0)),    \lb{2.16j} \\
& \text{$z_0$ is a pole of $A_1(\cdot)^{-1}$ of order $n_0 - 1$.}    \lb{2.17j}
\end{align}
In particular, $A_1:\Omega \rightarrow \cB(\cH)$ satisfies Hypothesis \ref{h2.3}. Finally, 
\begin{align}
& [I_{\cH} - A(\cdot)] \in \cF(\cH) \, \text{ $($resp., $\cB_p(\cH)$ for some $1 \leq p \leq \infty$$)$}  \lb{2.18j} \\ 
& \quad \text{if and only if } \, 
[I_{\cH} - A_1(\cdot)] \in \cF(\cH) \, \text{ $($resp., $\cB_p(\cH)$ for some $1 \leq p \leq \infty$$)$.}   
\no 
\end{align}
\end{theorem}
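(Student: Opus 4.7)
The plan is to define $A_1$ explicitly by
\[
A_1(z) := [Q_1 - (z - z_0) P_1]^{-1} A(z), \quad z \in \Omega \backslash \{z_0\},
\]
and then to show this extends analytically across $z_0$ with all the claimed properties. Using $Q_1 + P_1 = I_\cH$ and $Q_1 P_1 = P_1 Q_1 = 0$, a direct computation gives $[Q_1 - (z - z_0) P_1]^{-1} = Q_1 - (z - z_0)^{-1} P_1$ for $z \neq z_0$, so $A_1(z) = Q_1 A(z) - (z - z_0)^{-1} P_1 A(z)$. Removability of the apparent singularity at $z_0$ is the key observation: since $\ran(A(z_0)) = \ran(Q_1) \subseteq \ker(P_1)$, the analytic function $P_1 A(\cdot)$ vanishes at $z_0$, so its Taylor expansion begins with $(z - z_0) P_1 A'(z_0)$; after division by $(z - z_0)$ one obtains an analytic function with value $A_1(z_0) = A(z_0) - P_1 A'(z_0)$.

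Once $A_1$ is defined I will verify the claimed properties in turn. Fredholmness of $A_1(z)$ for $z \neq z_0$ is immediate, as it is a product of a bounded invertible operator and a Fredholm operator. At $z_0$, the formula $A_1(z_0) = A(z_0) - P_1 A'(z_0)$ displays $A_1(z_0)$ as a finite rank perturbation of $A(z_0)$ (here $P_1$ has rank $\de(A(z_0)) < \infty$ since $A(z_0) \in \Phi(\cH)$), hence Fredholm of the same index. The identity $\ind(A_1(z)) = \ind(A(z)) = 0$ near $z_0$ follows from bounded invertibility of $A$ at some point (placing us in case (ii) of Theorem \ref{t2.4}, and hence $\ind(A) \equiv 0$ on $\Omega$ by local constancy of the Fredholm index) together with multiplicativity of the index for products. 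The defect bound $\de(A_1(z_0)) \leq \de(A(z_0))$ follows from $A(z_0) = Q_1 A_1(z_0)$, which yields $\ker(A_1(z_0)) \subseteq \ker(A(z_0))$; since both operators have index zero, $\de(\cdot) = \dim \ker(\cdot)$ and the inequality is immediate. Finally, the ideal preservation stems from
\[
A_1(z) - A(z) = [I_\cH - (Q_1 - (z - z_0) P_1)] A_1(z) = (1 + (z - z_0)) P_1 A_1(z) \in \cF(\cH),
\]
so $I_\cH - A$ and $I_\cH - A_1$ differ by a finite rank operator, which lies in every Schatten class and in $\cF(\cH)$.

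The main obstacle will be the pole order reduction from $n_0$ to $n_0 - 1$. For this I plan to work with the Laurent expansion $A(z)^{-1} = \sum_{k \geq -n_0} (z - z_0)^k D_k$ (with $D_{-n_0} \neq 0$) and the identity $A_1(z)^{-1} = A(z)^{-1} [Q_1 - (z - z_0) P_1]$, which yields
\[
A_1(z)^{-1} = \sum_k (z - z_0)^k [D_k Q_1 - D_{k-1} P_1].
\]
Matching Laurent coefficients in $A(z)^{-1} A(z) = I_\cH$ at the $(z - z_0)^{-n_0}$ level forces $D_{-n_0} A(z_0) = 0$, so $D_{-n_0}$ annihilates $\ran(A(z_0)) = \ran(Q_1)$; hence $D_{-n_0} Q_1 = 0$, and the $(z - z_0)^{-n_0}$ coefficient of $A_1(z)^{-1}$ vanishes, giving pole order at most $n_0 - 1$. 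To rule out further cancellation I will examine the next coefficient, $D_{-n_0 + 1} Q_1 - D_{-n_0} P_1$, which after substituting $P_1 = I_\cH - Q_1$ simplifies to $D_{-n_0 + 1} Q_1 - D_{-n_0}$. If this were zero, $D_{-n_0}$ would vanish on $\ker(Q_1)$, and combined with $D_{-n_0} Q_1 = 0$ this would force $D_{-n_0} = 0$ on all of $\cH = \ran(Q_1) \oplus \ker(Q_1)$, contradicting $D_{-n_0} \neq 0$.
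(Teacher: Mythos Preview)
Your proposal is correct. The paper does not supply a proof of this theorem; it is stated with attribution to Howland \cite{Ho71} (with the extension to the Fredholm setting cited from \cite{GHN15}), so there is no in-paper argument to compare against. Your construction of $A_1$ via $A_1(z) = [Q_1 - (z-z_0)^{-1}P_1]A(z)$ and the removability argument using $P_1 A(z_0)=0$ is exactly the natural route (and essentially Howland's). The Fredholm, index, defect, and ideal-preservation claims are all handled cleanly; in particular, the observation $A_1(z)-A(z)=(1+(z-z_0))P_1A_1(z)\in\cF(\cH)$ is the right way to get \eqref{2.18j}.

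One small remark on the pole-order step: your argument that the $(z-z_0)^{-(n_0-1)}$ coefficient $D_{-n_0+1}Q_1-D_{-n_0}P_1$ is nonzero is fine for $n_0\geq 2$. For $n_0=1$ this coefficient is the constant term, and its nonvanishing is not quite what is needed; rather, you want $A_1(z_0)$ boundedly invertible. This follows immediately once you have shown the Laurent series of $A_1(\cdot)^{-1}$ has no negative powers: $A_1(\cdot)^{-1}$ then extends analytically across $z_0$, and continuity of $A_1(z)A_1(z)^{-1}=I_\cH$ forces invertibility at $z_0$. You may want to add one line making this explicit.
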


Assume that $A: \Omega \to \cB(\cH)$ satisfies Hypothesis \ref{h2.3} and that 
$A(\cdot)^{-1}$ has a pole at $z_0 \in \Omega$. 
The Riesz projection $P(z)$ associated with $A(z)$ and $z$ in a sufficiently small neighborhood $\cN(z_0) \subset \Omega$ of $z_0$ is defined by 
\begin{equation} \lb{2.19j}
P(z)=\f{-1}{2\pi i} \ointctrclockwise_{C(0;\varepsilon)} d\zeta \, (A(z) -
\zeta I_{\cH})^{-1}, \quad z \in \cN(z_0),
\end{equation}
where $0 < \varepsilon< \varepsilon_0$ sufficiently small (cf., e.g., \cite[Sect.\ III.6]{Ka80}). It follows that $P(\cdot)$ is analytic on $\cN(z_0)$ and 
\begin{equation}\label{2.20j}
\dim(\ran(P(z))) < \infty, \quad z \in \cN(z_0). 
\end{equation}
In addition, introduce the projections 
\begin{equation}\label{2.21j}
Q(z)=I_{\cH} - P(z), \quad z \in \cN(z_0),
\end{equation}
and the transformations (cf.\ \cite{Wo52})
\begin{equation}\label{2.22j}
T(z)=P(z_0) P(z)+Q(z_0) Q(z), \quad z \in \cN(z_0). 
\end{equation}
It follows that $T(\cdot)$ is analytic on $\cN(z_0)$ and for $|z-z_0|$ sufficiently small, also $T(\cdot)^{-1}$ is analytic, 
\begin{equation}\label{2.24j}
T(z)= I_{\cH} + \Oh(z-z_0), \quad |z - z_0| \, \text{ sufficiently small},     
\end{equation}
and without loss of generality we may assume in the following that $T(\cdot)$ and $T(\cdot)^{-1}$ 
are analytic on $\cN(z_0)$. This yields the decomposition of $\cH$ into
\begin{equation}\label{2.25j}
\cH = P(z_0) \cH \dotplus Q(z_0) \cH    
\end{equation}
and the associated $2 \times 2$ block operator decomposition of $T(z) A(z) T(z)^{-1}$ of the form
\begin{equation}\label{2.26j}
T(z) A(z) T(z)^{-1} = \begin{pmatrix} F(z) & 0 \\ 0 & G(z) \end{pmatrix}, \quad z \in \cN(z_0), 
\end{equation}
where $F(\cdot)$ and $G(\cdot)$ are analytic on $\cN(z_0)$, and, again without loss of generality, 
$G(\cdot)$ is boundedly invertible on $\cN(z_0)$,  
\begin{equation}\label{2.27j}
G(z)^{-1} \in\cB(Q(z_0)\cH),  \quad z \in \cN(z_0).     
\end{equation}   
Given the block decomposition \eqref{2.26j}, we follow Howland in introducing the quantity 
$\nu(z_0; A(\cdot))$ by 
\begin{equation}\label{2.31j}
\nu(z_0; A(\cdot)) = m\bigl(z_0; {\det}_{\ran(P(z_0))} (F(\cdot))\bigr).
\end{equation}
Here $m(z;h)$ denotes the multiplicity function associated to a meromorphic function $h\colon\Omega\to\bbC\cup\{\infty\}$, which is  
defined by
\begin{equation}\label{2.32j}
 m(z;h)=\begin{cases} k, & \text{if $z$ is a zero of $h$ of order $k$,} \\
-k, & \text{if $z$ is a pole of order $k$,} \\
0, & \text{otherwise,} \end{cases}
\end{equation}
if $m$ does not vanish identically on $\Omega$, and by $m(z;h)=\infty$ otherwise. In the former 
case, 
\begin{equation} 
m(z;h) = \f{1}{2\pi i}\ointctrclockwise_{C(z; \varepsilon) } d\zeta \,
\f{h'(\zeta)}{h(\zeta)}, \quad z\in\Omega,      \lb{2.3.2k}
\end{equation} 
where the circle $C(z; \varepsilon)$ is chosen sufficiently small such
that $C(z; \varepsilon)$ contains no other singularities or zeros of $h$ except, possibly, $z$. 

In the present context,
since $F(\cdot)$ is analytic on $\cN(z_0)$, so is ${\det}_{\ran(P(z_0))} (F(\cdot))$, and hence 
\begin{equation}\label{2.33j}
\nu(z_0; A(\cdot)) \in \bbN_0 \, 
\text{ if ${\det}_{\ran(P(z_0))} (F(\cdot)) \not\equiv 0$ on $\cN(z_0)$.}
\end{equation}

Repeated applications of Theorem \ref{t2.5} then yields the following principal factorization result of \cite{Ho71} (again, extended to the case of Fredholm 
operators $A(\cdot)$):

\begin{theorem} [\cite{Ho71}] \lb{t2.6}
Assume that $A:\Omega \to \cB(\cH)$ satisfies Hypothesis \ref{h2.3}, suppose that $A(z)$ is boundedly invertible for 
some $z \in \Omega$ $($i.e., case $(ii)$ in Theorem \ref{t2.4} applies\,$)$, and let 
$z_0 \in \Omega$ be a pole of $A(\cdot)^{-1}$ of order $n_0 \in \bbN$. Then there exist projections $P_j$ and $Q_j = I_{\cH} - P_j$ in $\cH$ such that with 
$p_j = \dim(\ran(P_j))$, $1 \leq j \leq n_0$, one infers that 
\begin{equation}\lb{2.34j}
A(z) = [Q_1 - (z - z_0) P_1] [Q_2 - (z - z_0) P_2] \cdots [Q_{n_0} - (z - z_0) P_{n_0}] A_{n_0}(z), 
\quad z \in \Omega,    
\end{equation}
and 
\begin{equation}\lb{2.35j}
1 \leq p_{n_0} \leq p_{n_0 -1} \leq \cdots \leq p_2 \leq p_1 < \infty,    
\end{equation}
where
\begin{align} 
& \text{$A_{n_0}(\cdot)$ is analytic on $\Omega$,}    \lb{2.36j} \\
& A_{n_0}(z) \in \Phi(\cH), \quad z\in\Omega,      \lb{2.37j} \\ 
& \ind(A(z)) = \ind(A_{n_0}(z)) = 0, \quad z \in \Omega, \; |z - z_0| \, \text{ sufficiently small,}     \lb{2.38j} \\
& A_{n_0}(z)^{-1} \in \cB(\cH), \quad z \in \Omega, \; |z - z_0| \, \text{ sufficiently small.}      \lb{2.39j} 
\end{align}
In addition, 
\begin{equation}\lb{2.40j}
p_1 = \dim(\ker(A(z_0)) = m_g(0; A(z_0)),   
\end{equation}
and hence
\begin{equation}\lb{2.41j}
\nu(z_0; A(\cdot)) = \sum_{j=1}^{n_0} p_j \geq m_g(0; A(z_0)), \quad \nu(z_0; A(\cdot)) \geq n_0,  
\end{equation}
and, in particular, $z_0$ is a simple pole of $A(\cdot)^{-1}$ if and only if 
\begin{equation}\lb{2.42j}
\nu(z_0; A(\cdot)) = m_g(0; A(z_0)).
\end{equation}
Finally, 
\begin{align}\label{2.43j}
& [I_{\cH} - A(\cdot)] \in \cF(\cH) \, \text{ $($resp., $\cB_p(\cH)$ for some $1 \leq p \leq \infty$$)$}   \\ 
& \quad \text{if and only if } \, 
[I_{\cH} - A_{n_0}(\cdot)] \in \cF(\cH) \, \text{ $($resp., $\cB_p(\cH)$ for some $1 \leq p \leq \infty$$)$.}  \no 
\end{align}
\end{theorem}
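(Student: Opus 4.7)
The strategy is to iterate Theorem~\ref{t2.5} exactly $n_0$ times. Setting $A_0 := A$, at the $j$th step ($1 \leq j \leq n_0$) I apply Theorem~\ref{t2.5} to $A_{j-1}(\cdot)$, which by the previous iteration satisfies Hypothesis~\ref{h2.3} and has a pole of $A_{j-1}(\cdot)^{-1}$ at $z_0$ of order $n_0 - j + 1 \geq 1$. This produces a projection $Q_j$ onto $\ran(A_{j-1}(z_0))$, its complement $P_j = I_{\cH} - Q_j$, and an analytic Fredholm function $A_j$ whose inverse $A_j(\cdot)^{-1}$ has a pole of order $n_0 - j$ at $z_0$. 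Concatenating the factors produced at each step yields \eqref{2.34j}. After $n_0$ steps, $A_{n_0}(\cdot)^{-1}$ is analytic at $z_0$, establishing \eqref{2.39j}, while analyticity on $\Omega$, the Fredholm property, and vanishing of the index near $z_0$, namely \eqref{2.36j}--\eqref{2.38j}, transfer iteratively from \eqref{2.13j}--\eqref{2.15j}.

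The size and monotonicity assertions \eqref{2.35j} and \eqref{2.40j} then follow by bookkeeping. By construction $p_j = \dim(\ran(P_j)) = \de(A_{j-1}(z_0))$; since $z_0$ is a genuine pole of $A_{j-1}(\cdot)^{-1}$ for $j \leq n_0$, the operator $A_{j-1}(z_0)$ fails to be boundedly invertible, and combining this with $\ind(A_{j-1}(z_0)) = 0$ gives $p_j = \dim(\ker(A_{j-1}(z_0))) \geq 1$. The defect monotonicity \eqref{2.16j} applied at each step then yields $p_{j+1} \leq p_j$, while specialization to $j = 1$ produces \eqref{2.40j}.

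The core step is \eqref{2.41j}. For each individual factor $B_j(z) := Q_j - (z-z_0) P_j$ the direct sum $\cH = \ran(P_j) \dotplus \ran(Q_j)$ diagonalizes $B_j(z)$ into the blocks $-(z-z_0) I_{\ran(P_j)}$ and $I_{\ran(Q_j)}$; the Riesz projection of $B_j(z_0) = Q_j$ at eigenvalue $0$ is exactly $P_j$, so the block $F_j$ in \eqref{2.26j} is $-(z-z_0) I_{\ran(P_j)}$ and \eqref{2.31j} gives $\nu(z_0; B_j(\cdot)) = p_j$. To pass to the product, I would invoke the argument-principle representation \eqref{2.3.2k} applied to the scalar-valued finite-dimensional determinant in \eqref{2.31j}: this determinant factors, up to an analytic, nonvanishing factor at $z_0$ arising from $A_{n_0}(\cdot)$ (which contributes nothing in view of \eqref{2.39j}), into a product matching the individual $B_j$, so its zero orders add, yielding $\nu(z_0; A(\cdot)) = \sum_{j=1}^{n_0} p_j$. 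The two inequalities in \eqref{2.41j} are then immediate from $p_j \geq 1$ and $p_1 = m_g(0; A(z_0))$.

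Finally, \eqref{2.42j} follows because a simple pole corresponds exactly to $n_0 = 1$, in which case $\nu(z_0; A(\cdot)) = p_1 = m_g(0; A(z_0))$; conversely, equality $\nu = m_g$ combined with $p_j \geq 1$ for all $j$ forces $n_0 = 1$. The Schatten-class statement \eqref{2.43j} follows by iterating \eqref{2.18j}, noting that $I_{\cH} - B_j(z) = (1 + (z-z_0)) P_j$ has finite rank and hence preserves both $\cF(\cH)$ and $\cB_p(\cH)$ throughout the iteration. The main obstacle is the multiplicativity of $\nu$ under the factorization: although each single-factor computation is elementary, aligning the Riesz projection and block decomposition of the full product with the individual-factor decompositions requires careful tracking of the transformations $T(\cdot)$ from \eqref{2.22j} so as to split the determinant in \eqref{2.31j} multiplicatively into the contributions from the $B_j$ and from the invertible tail $A_{n_0}$.
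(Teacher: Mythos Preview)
Your approach is exactly the one the paper indicates: the paper gives no proof for Theorem~\ref{t2.6} beyond the single sentence ``Repeated applications of Theorem~\ref{t2.5} then yields the following principal factorization result,'' and you have correctly fleshed this out---the iteration producing \eqref{2.34j}, the inheritance of \eqref{2.36j}--\eqref{2.39j} from \eqref{2.13j}--\eqref{2.17j}, the identifications $p_j = \de(A_{j-1}(z_0)) = \dim(\ker(A_{j-1}(z_0)))$ together with the monotonicity \eqref{2.16j} giving \eqref{2.35j} and \eqref{2.40j}, and the iteration of \eqref{2.18j} giving \eqref{2.43j} are all exactly right.

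The one place where you correctly flag a real obstacle is \eqref{2.41j}. Your heuristic---that the determinant in \eqref{2.31j} should factor according to the $B_j$---is morally right but not immediate, since the Riesz projection $P(z_0)$ and the finite-dimensional space on which $F(\cdot)$ acts are attached to the full product $A(z_0)$, not to the individual factors. In Howland's original treatment this is handled by a separate one-step lemma establishing $\nu(z_0; A(\cdot)) = p_1 + \nu(z_0; A_1(\cdot))$ directly from the definition \eqref{2.31j} and the single-step factorization \eqref{2.12j}; iterating that identity (and using $\nu(z_0; A_{n_0}(\cdot)) = 0$ from \eqref{2.39j}) yields \eqref{2.41j}. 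An alternative route---which the present paper in effect takes later in Theorem~\ref{t3.4a}---is to bypass the determinant and compute the trace of $\oint A^{-1}A'$ directly from the factorization \eqref{2.34j}, where multiplicativity is transparent via the product rule; but note that the paper's proof of Theorem~\ref{t3.4a} actually \emph{uses} \eqref{2.41j} as input, so one cannot simply cite it here without circularity.
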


We refer to \cite{GHN15} for analogous factorizations as in Theorems 
\ref{t2.5} and \ref{t2.6} but with the order of factors in \eqref{2.12j} and \eqref{2.34j} interchanged.

\section{Algebraic Multiplicities of Zeros of Analytic Fredholm Operators} \lb{s3}

In this section we recall algebraic multiplicities of zeros of analytic Fredholm operators 
following \cite{GHN15} and relate this to Howland's notion  in \eqref{2.31j}.
The pertinent facts in this context can be found in \cite{GS71} (see 
also, \cite[Sects.\ XI.8, XI.9]{GGK90}, \cite[Ch.\ 4]{GL09}, and \cite[Sect.\ 11]{Ma88}). 
We follow the presentation in \cite{GHN15}.

First the notion of zeros of finite-type is recalled.

\begin{definition} \label{def3.1}
Let $\Omega \subseteq \bbC$ be open and connected, $z_0 \in \Omega$, and suppose that 
$A:\Omega \to \cB(\cH)$ is analytic on $\Omega$. Then $z_0$ is called a {\it zero of finite-type 
of $A(\cdot)$} if $A(z_0)\in \Phi(\cH)$ is a Fredholm operator, $\ker(A(z_0)) \neq \{0\}$, and $A(\cdot)$ is 
boundedly invertible on $D(z_0; \varepsilon_0) \backslash \{z_0\}$, for some sufficiently small 
$\varepsilon_0 > 0$. 
\end{definition}

Assume that $A:\Omega \to \cB(\cH)$ is analytic on $\Omega$ and that
$z_0$ is a zero of finite-type of $A(\cdot)$. Since $A(\cdot)$ is 
boundedly invertible on $D(z_0; \varepsilon_0) \backslash \{z_0\}$, for sufficiently small 
$\varepsilon_0 > 0$, it follows that 
\begin{equation}\label{3.5j}
\ind(A(z_0)) =\dim(\ker(A(z_0))) - \dim(\ker(A(z_0)^*))= 0,  
\end{equation}
and hence by 
\cite{GS71} (or by 
\cite[Theorem\ XI.8.1]{GGK90}) there exists a neighborhood 
$\cN(z_0) \subset \Omega$ and analytic and boundedly invertible 
operator-valued functions $E_j: \Omega \to \cB(\cH)$, $j=1,2$, such that 
\begin{equation}\label{3.2j}
A(z) = E_1(z) \wti A(z) E_2(z), \quad z \in \cN(z_0), 
\end{equation}
where $\wti A(\cdot)$ is of the form
\begin{equation}\label{3.3j}
\wti A(z) = \wti P_0 + \sum_{j=1}^r (z - z_0)^{n_j} \wti P_j, \quad z \in \cN(z_0),   
\end{equation}
with 
\begin{align} 
& \wti P_k, \; 0 \leq k \leq r, \, \text{ mutually disjoint projections in $\cH$,}\no    \\ 
& \big[I_{\cH} - \wti P_0 \big] \in \cF(\cH), \quad 
\dim\big(\ran\big(\wti P_j\big)\big) = 1, \quad 1 \leq j \leq r,      \lb{3.3jj} \\ 
& n_1 \leq n_2 \leq \dots \leq n_r, \quad n_j \in \bbN, \; 1 \leq j \leq r.    \no 
\end{align} 
The integers $n_j$, $1\leq j \leq r$, in \eqref{3.3jj} are 
uniquely determined by $A(\cdot)$, and the geometric multiplicity 
$m_g(0; A(z_0))$ of the eigenvalue $0$ of $A(z_0)$ is given by
\begin{equation}\label{3.4j}
m_g(0; A(z_0)) = \dim\big(\ran\big(I_{\cH} - \wti P_0\big)\big). 
\end{equation}

The following definition can be found in \cite[Sect.\ XI.9]{GGK90}, \cite{GS71}.

\begin{definition} \label{def3.2}
Let $\Omega \subseteq \bbC$ be open and connected, $z_0 \in \Omega$,  suppose that 
$A:\Omega \to \cB(\cH)$ is analytic on $\Omega$, and assume that $z_0$ is a zero of finite-type 
of $A(\cdot)$.  Then $m_a(z_0; A(\cdot))$, the {\it algebraic multiplicity of the zero of $A(\cdot)$ 
at $z_0$}, is defined to be 
\begin{equation}\lb{3.6j}
m_a(z_0; A(\cdot)) = \sum_{j=1}^r n_j,   
\end{equation} 
with $n_j$, $1 \leq j \leq r$, introduced in \eqref{3.3jj}. 
\end{definition}

Let $A:\Omega \to \cB(\cH)$ be analytic on $\Omega$ and assume that $z_0$ is a zero of finite-type 
of $A(\cdot)$. As shown in \cite[Theorem\ XI.9.1]{GGK90}, \cite{GS71} one has an extension of the argument principle for 
scalar analytic functions to the operator-valued case  in the form 
\begin{equation}\label{3.10j}
\begin{split}
 m_a(z_0; A(\cdot)) &= {\tr}_{\cH}\bigg(\f{1}{2\pi i} \ointctrclockwise_{C(z_0; \varepsilon)} d\zeta \, A'(\zeta) A(\zeta)^{-1}\bigg)  \\
                    &= {\tr}_{\cH}\bigg(\f{1}{2\pi i} \ointctrclockwise_{C(z_0; \varepsilon)} d\zeta \, A(\zeta)^{-1} A'(\zeta)\bigg) 
\end{split}
\end{equation}
for $0 < \varepsilon<\varepsilon_0$ sufficiently small as 
in Definition \ref{def3.1}.
Since $A(\cdot)^{-1}$ is finitely meromorphic by Theorem~\ref{t2.4}, the integrals in \eqref{3.10j} 
are finite rank operators (the analytic and non-finite-rank part under the integral in \eqref{3.10j} yielding a zero contribution when integrated over 
$C(z_0; \varepsilon)$) and hence the trace in \eqref{3.10j} is 
well-defined. 
Next, recalling our notation of the principal part of an operator-valued meromorphic function in 
\eqref{2.2}, one also obtains 
\begin{equation}\label{3.11j}
 \begin{split}
  m_a(z_0; A(\cdot))& = {\tr}_{\cH}\bigg(\f{1}{2\pi i}  \ointctrclockwise_{C(z_0; \varepsilon)} d\zeta \, {\rm pp}_{z_0} \, \big\{A'(\zeta) A(\zeta)^{-1}\big\}\bigg) \\
                    & = {\tr}_{\cH}\bigg(\f{1}{2\pi i}  \ointctrclockwise_{C(z_0; \varepsilon)} d\zeta \, {\rm pp}_{z_0} \, \big\{A(\zeta)^{-1}A'(\zeta) \big\}\bigg). 
 \end{split}
\end{equation}

Note that in the special case where $A(z) = A - z I_{\cH}$, $z \in \Omega$, one has from \eqref{3.10j}
\begin{equation}\label{special}
\begin{split}
 m_a(z_0; A(\cdot))&= {\tr}_{\cH}\bigg(\f{1}{2\pi i} 
\ointctrclockwise_{C(z_0; \varepsilon)} d\zeta \, 
A'(\zeta) A(\zeta)^{-1}\bigg)\\
&= {\tr}_{\cH}\bigg(\f{-1}{2\pi i} 
\ointctrclockwise_{C(z_0; \varepsilon)} d\zeta \,   (A-\zeta I_\cH)^{-1}\bigg)  \\
&= m_a(z_0; A).
\end{split}
\end{equation}
However, in general the algebraic multiplicity 
$m_a(z_0; A(\cdot))$ of a zero of $A(\cdot)$ at $z_0$ must be distinguished from the algebraic multiplicity $m_a(0; A(z_0))$ of the eigenvalue $0$ of the operator $A(z_0)$. 

We conclude this section with the connection between the algebraic multiplicity 
$m_a(z_0; A(\cdot))$ of a zero of $A(\cdot)$ at $z_0$ in Definition~\ref{def3.2} and Howland's notion 
of multiplicity $\nu(z_0; A(\cdot))$ in \eqref{2.31j}. Note that if $A:\Omega \to \cB(\cH)$ is analytic on $\Omega$ and 
$z_0$ is a zero of finite-type  then Hypothesis~\ref{h2.3} is automatically satisfied on a sufficiently small open neighborhood of $z_0$ and hence
the quantity $\nu(z_0; A(\cdot))$ is well defined.

\begin{theorem} \label{t3.4a}
Assume that  $z_0$ is a zero of 
finite-type of $A(\cdot)$.~Then the algebraic multiplicity 
$m_a(z_0; A(\cdot))$ of the zero of $A(\cdot)$ at $z_0$ and the quantity $\nu(z_0; A(\cdot))$ coincide, that is,  
\begin{equation}\label{3.xxj}
m_a(z_0; A(\cdot)) =  \nu(z_0; A(\cdot)).
\end{equation}
\end{theorem}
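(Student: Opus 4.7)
The plan is to compute $m_a(z_0; A(\cdot))$ via the argument-principle formula \eqref{3.10j} and transport the computation through Howland's block decomposition \eqref{2.26j} until it matches $\nu(z_0; A(\cdot))$. Starting from
\begin{equation*}
m_a(z_0; A(\cdot)) = \tr_{\cH}\bigg(\f{1}{2\pi i}\ointctrclockwise_{C(z_0;\varepsilon)} d\zeta\, A(\zeta)^{-1} A'(\zeta)\bigg),
\end{equation*}
I would set $B(z) := T(z) A(z) T(z)^{-1}$ and identify the similarity invariance
\begin{equation*}
\tr_{\cH}\bigg(\ointctrclockwise_{C(z_0;\varepsilon)} d\zeta\, A^{-1} A'\bigg) = \tr_{\cH}\bigg(\ointctrclockwise_{C(z_0;\varepsilon)} d\zeta\, B^{-1} B'\bigg)
\end{equation*}
as the central reduction.

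To establish this invariance, I would expand, using $(T^{-1})' = -T^{-1} T' T^{-1}$,
\begin{equation*}
B^{-1} B' = T A^{-1} T^{-1} T' A T^{-1} + T A^{-1} A' T^{-1} - T' T^{-1},
\end{equation*}
and apply Lemma \ref{l2.2}, which supplies the ``integrated cyclicity'' $\tr_{\cH}(\ointctrclockwise_{C(z_0;\varepsilon)} d\zeta\, M_1 M_2) = \tr_{\cH}(\ointctrclockwise_{C(z_0;\varepsilon)} d\zeta\, M_2 M_1)$ for $\cB(\cH)$-valued finitely meromorphic $M_1, M_2$. The third summand $T' T^{-1}$ is analytic on $\cN(z_0)$ and contributes zero. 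One application of Lemma \ref{l2.2} rewrites the trace of the middle summand as $\tr_{\cH}(\ointctrclockwise d\zeta\, A^{-1} A')$. Two successive applications to the first summand (cycling $T^{-1}$ past, then $A$ past) collapse it to $\tr_{\cH}(\ointctrclockwise d\zeta\, T^{-1} T') = 0$, since $T^{-1} T'$ is likewise analytic on $\cN(z_0)$. Summing yields the desired invariance.

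Once similarity invariance is in hand, the block structure $B = \diag(F, G)$ gives $B^{-1} B' = \diag(F^{-1} F', G^{-1} G')$. By \eqref{2.27j} the operator $G^{-1} G'$ is analytic on $\cN(z_0)$, so the lower block contributes zero after integration, and only the $F$-block on the finite-dimensional space $P(z_0) \cH$ survives. Exchanging trace and integral in finite dimensions and invoking the classical Jacobi identity $\tr(F^{-1} F') = (\det F)'/\det F$ yields
\begin{equation*}
\tr_{\cH}\bigg(\f{1}{2\pi i}\ointctrclockwise d\zeta\, B^{-1} B'\bigg) = \f{1}{2\pi i}\ointctrclockwise d\zeta\, \f{\big({\det}_{P(z_0)\cH} F(\zeta)\big)'}{{\det}_{P(z_0)\cH} F(\zeta)} = m\big(z_0; {\det}_{P(z_0)\cH} F(\cdot)\big),
\end{equation*}
which is $\nu(z_0; A(\cdot))$ by \eqref{2.31j} and \eqref{2.3.2k}. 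Chaining the equalities produces \eqref{3.xxj}.

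I expect the main obstacle to be the similarity-invariance step: neither $A^{-1} A'$ nor $B^{-1} B'$ is trace class in infinite dimensions, so a naive pointwise $\tr(XY)=\tr(YX)$ is unavailable. The resolution is precisely Lemma \ref{l2.2}, which is tailored to this setting because the residues of finitely meromorphic functions are finite rank, and that is what rescues cyclicity under the integral sign. A minor ancillary check is that ${\det}_{P(z_0)\cH} F(\cdot) \not\equiv 0$ on $\cN(z_0)$, which follows because $A(\cdot)$, and hence $B(\cdot) = \diag(F, G)$, is boundedly invertible on a punctured neighborhood of $z_0$ (the defining feature of a zero of finite type), forcing $F(\cdot)$ to be invertible there as well and placing us in the case of \eqref{2.33j}.
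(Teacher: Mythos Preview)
Your proof is correct, and it takes a genuinely different route from the paper's.

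The paper proves the identity by invoking Howland's factorization (Theorem~\ref{t2.6}): it writes $A(z)=\prod_{j=1}^{n_0}[Q_j-(z-z_0)P_j]\,A_{n_0}(z)$, uses the identity $\nu(z_0;A(\cdot))=\sum_j p_j$ from \eqref{2.41j}, expands $A^{-1}A'$ by the product rule, and reduces via cyclicity of the trace to $\sum_j \tr_{\cH}\big([Q_j-\zeta P_j]^{-1}(-P_j)\big)=\zeta^{-1}\sum_j p_j$, which integrates to $\sum_j p_j$. Your argument bypasses the factorization entirely: you go straight to the block diagonalization \eqref{2.26j} that defines $\nu$, establish similarity invariance of the contour-integrated logarithmic derivative via Lemma~\ref{l2.2}, discard the analytic $G$-block, and finish with Jacobi's formula on the finite-dimensional $F$-block. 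Your approach is more conceptual and self-contained---it needs only \eqref{2.26j}--\eqref{2.27j} and Lemma~\ref{l2.2}, not the full strength of Theorem~\ref{t2.6} or the identity \eqref{2.41j}---while the paper's approach is more explicitly computational and ties the multiplicity directly to the factorization data $\{p_j\}$.
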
 
\begin{proof}
Without loss of generality we may assume that $z_0=0$ for the remainder of the proof of 
Theorem \ref{t3.4a}. 
According to \eqref{3.10j} we then have
\begin{equation}\label{3.15j}
 m_a(0; A(\cdot)) = {\tr}_{\cH} \bigg(\f{1}{2 \pi i} 
\ointctrclockwise_{C(0; \varepsilon)} d\zeta \, A(\zeta)^{-1} A'(\zeta) \bigg)
\end{equation}
for $0<\varepsilon<\varepsilon_0$ sufficiently small.
An application of 
Theorem~\ref{t2.6} (using the notation employed in the latter) yields 
\begin{equation}\label{j1}
A(z) = [Q_1 - z P_1][Q_2 - z P_2] \cdots 
[Q_{n_0} - z P_{n_0}] A_{n_0} (z), \quad z\in D(0;\varepsilon_0),
\end{equation}
and 
\begin{equation}\label{j2}
\nu(0; A(\cdot)) = \sum_{j=1}^{n_0} p_j,  
\end{equation}
where 
\begin{equation}\label{j2-2}
p_j = \dim(\ran(P_j))\quad\text{and} \quad   
Q_j = I_{\cH} - P_j, \; 1 \leq j \leq n_0.   
\end{equation}

In the following we compute the trace of the integral in \eqref{3.15j}. For this one notes that
by \eqref{j1}
\begin{equation}\label{j3}
 A(z)^{-1}=[A_{n_0} (z)]^{-1} [Q_{n_0} - z P_{n_0}]^{-1} \cdots [Q_1 - z P_1]^{-1},  
\end{equation}
and 
\begin{equation}\label{j4}
\begin{split}
 A'(z)&=[-P_1][Q_2 - z P_2] \cdots [Q_{n_0} - z P_{n_0}] 
A_{n_0} (z)    \\
& \qquad + [Q_1 - z P_1] [-P_2] \cdots [Q_{n_0} - z P_{n_0}] A_{n_0} (z) \\
& \qquad + \cdots + \\
& \qquad + 
[Q_1 - z P_1] [Q_2 - z P_2] \cdots [Q_{n_0 - 1} - z P_{n_0 - 1}] 
[- P_{n_0}] A_{n_0} (z)    \\
& \qquad + [Q_1 - z P_1] [Q_2 - zP_2] \cdots [Q_{n_0} - z P_{n_0}] 
A_{n_0}' (z).
 \end{split}
\end{equation}
Hence one obtains 
\begin{equation}\label{j55}
 \begin{split}
  A(z)^{-1}A'(z)&= [A_{n_0} (z)]^{-1} [Q_{n_0} - z P_{n_0}]^{-1} \cdots [Q_1 - z P_1]^{-1}     \\
& \,\,\,\quad \times  \big\{[-P_1][Q_2 - z P_2] \cdots [Q_{n_0} - z P_{n_0}]  \\
& \quad\qquad + [Q_1 - z P_1] [-P_2] \cdots [Q_{n_0} - z P_{n_0}]   \\
& \quad\qquad + \cdots + \\
& \quad\qquad +
[Q_1 - z P_1] \cdots [Q_{n_0 - 1} - z P_{n_0 - 1}] 
[- P_{n_0}] \big\} A_{n_0} (z)\\
& \,\,\,\quad + [A_{n_0} (z)]^{-1} A_{n_0}' (z), 
 \end{split}
\end{equation}
and since the last term on the right-hand side of \eqref{j55} is analytic at $z_0=0$, its contour 
integral over $C(0;\varepsilon)$, 
$0 < \varepsilon < \varepsilon_0$, vanishes,
\begin{align}\label{j6}
& \ointctrclockwise_{C(0; \varepsilon)} d\zeta \,  A(\zeta)^{-1}A'(\zeta)  
 = \ointctrclockwise_{C(0; \varepsilon)} d\zeta 
 [A_{n_0} (\zeta)]^{-1} [Q_{n_0} - \zeta P_{n_0}]^{-1} 
\cdots [Q_1 - \zeta P_1]^{-1}    \no \\
& \qquad \times 
\big\{[-P_1][Q_2 - \zeta P_2] \cdots [Q_{n_0} - \zeta P_{n_0}] 
+ [Q_1 - \zeta P_1] [-P_2] \cdots [Q_{n_0} - \zeta P_{n_0}]    \no \\
& \qquad \quad + \cdots +
[Q_1 - \zeta P_1] \cdots [Q_{n_0 - 1} - \zeta P_{n_0 - 1}] 
[- P_{n_0}] \big\} A_{n_0} (\zeta).    
\end{align}
Now one obtains from \eqref{j6} upon 
repeatedly applying cyclicity of the trace (i.e., 
${\tr}_{\cH}(CD) = {\tr}_{\cH}(DC)$ for $C, D \in \cB(\cH)$, with 
$CD, DC \in \cB_1(\cH)$),
\begin{align} \label{j7}
& \ointctrclockwise_{C(0; \varepsilon)} d\zeta \,  {\tr}_{\cH}\bigl(A(\zeta)^{-1} A'(\zeta)\bigr)  
 = \ointctrclockwise_{C(0; \varepsilon)} d\zeta \,{\tr}_{\cH}\Big( [Q_{n_0} - \zeta P_{n_0}]^{-1} 
\cdots [Q_1 - \zeta P_1]^{-1}    \no \\
& \qquad \times 
\big\{[-P_1][Q_2 - \zeta P_2] \cdots [Q_{n_0} - \zeta P_{n_0}] 
+ [Q_1 - \zeta P_1] [-P_2] \cdots [Q_{n_0} - \zeta P_{n_0}]     \no \\
& \qquad \quad + \cdots + 
[Q_1 - \zeta P_1] \cdots [Q_{n_0 - 1} - \zeta P_{n_0 - 1}] 
[- P_{n_0}] \big\} \Big)   \no \\
& \quad =
\ointctrclockwise_{C(0; \varepsilon)} d\zeta \, 
\sum_{j=1}^{n_0} \, {\tr}_{\cH}\big([Q_j - \zeta P_j]^{-1} [- P_j]\big)  
\end{align}
and since 
\begin{equation}\label{j8}
[Q_j - \zeta P_j]^{-1} [- P_j]=[Q_j - \zeta^{-1} P_j] [- P_j]=\zeta^{-1} P_j, 
\end{equation}
one concludes from \eqref{3.15j}, \eqref{j7}, \eqref{j8}, \eqref{j2-2}, and \eqref{j2} that
\begin{equation}\label{j9}
 \begin{split}
  m_a(0;  A(\cdot))
  & = {\tr}_{\cH} \bigg(\f{1}{2 \pi i} \ointctrclockwise_{C(0; \varepsilon)} d\zeta \, A(\zeta)^{-1} A'(\zeta) \bigg) \\
   & = \f{1}{2 \pi i} \ointctrclockwise_{C(0; \varepsilon)} d\zeta \, {\tr}_{\cH}\bigl( A(\zeta)^{-1} A'(\zeta) \bigr) \\
& = \f{1}{2 \pi i} 
\ointctrclockwise_{C(0; \varepsilon)} d\zeta \, 
\sum_{j=1}^{n_0} \, {\tr}_{\cH}\big([Q_j - \zeta P_j]^{-1} [- P_j]\big)      \\
& = \f{1}{2 \pi i} 
\ointctrclockwise_{C(0; \varepsilon)} d\zeta \, 
\bigg(\sum_{j=1}^{n_0} \, {\tr}_{\cH} (P_j) \bigg) \zeta^{-1}     \\ 
& = \sum_{j=1}^{n_0} p_j \\ 
&= \nu(0;A(\cdot)).
\end{split}
\end{equation}
\end{proof}

\section{On the Notion of an Index of Meromorphic Operator-Valued Functions} \lb{s4}

In this section we recall the notion of the index of meromorphic operator functions and the meromorphic Fredholm theorem.

\begin{hypothesis} \lb{h4.1} 
Let $\Omega \subseteq \bbC$ be open and connected and assume that $M(\cdot)$ is a $\cB(\cH)$-valued finitely meromorphic function on $\Omega$, that is,
there is a discrete set $\cD_0 \subset \Omega$ 
(i.e., a set without limit points in $\Omega$) such that $M:\Omega \backslash \cD_0 \to \cB(\cH)$ is analytic and 
for all $z_0 \in \cD_0$ one has
\begin{equation}\label{4.1j}
M(z) = \sum_{k=-N_0}^{\infty} (z-z_0)^k M_k(z_0), \quad 
0 < |z - z_0| < \varepsilon_0,   
\end{equation}
for some $N_0=N_0(z_0) \in \bbN$ and some $0 < \varepsilon_0=\varepsilon_0(z_0)$ 
sufficiently small, with  
\begin{equation}\lb{4.2j} 
M_k(z_0) \in \cF(\cH), \; -N_0 \leq k \leq -1,  \quad  M_k(z_0) \in \cB(\cH),  \; k \in \bbN_0. 
\end{equation} 
\end{hypothesis}

One observes that if $M(\cdot)$ is finitely meromorphic on $\Omega$, then also the function 
$M'(\cdot)$ is a $\cB(\cH)$-valued finitely meromorphic function on $\Omega$.
It follows from Lemma~\ref{l2.2} that the notion of the index of $M(\cdot)$ in the next definition is 
well-defined. 

\begin{definition} \lb{d4.2} 
Assume Hypothesis \ref{h4.1}, let $z_0\in \Omega$ and suppose that $M(\cdot)$ is boundedly 
invertible on 
$D(z_0; \varepsilon_0) \backslash \{z_0\}$ for some $0 < \varepsilon_0$ sufficiently small.
Assume, in addition, that the function $M(\cdot)^{-1}$ is finitely meromorphic on $D(z_0; \varepsilon_0)$. Then the {\it index of $M(\cdot)$ with respect to the counterclockwise 
oriented circle $C(z_0; \varepsilon)$}, $\ind_{C(z_0; \varepsilon)}(M(\cdot))$, is defined by 
 \begin{align}
\begin{split}
\ind_{C(z_0; \varepsilon)}(M(\cdot)) &= {\tr}_{\cH}\bigg(\f{1}{2\pi i} 
\ointctrclockwise_{C(z_0; \varepsilon)} d\zeta \, 
M'(\zeta) M(\zeta)^{-1}\bigg)     \lb{4.3j} \\
& = {\tr}_{\cH}\bigg(\f{1}{2\pi i} 
\ointctrclockwise_{C(z_0; \varepsilon)} d\zeta \, 
M(\zeta)^{-1} M'(\zeta)\bigg), \quad 0 < \varepsilon < \varepsilon_0. 
\end{split}
\end{align}  
\end{definition}

We note that this notion of an index is a bit more general than the one employed in
\cite[Ch.~4]{GL09}, \cite{GS71} and hence it is not {\it a priori} clear if the right-hand side 
of \eqref{4.3j} is
an integer. However, in the special case depicted in Theorem \ref{t3.2}\,$(ii)$ (see also \eqref{4.9}) under the additional
Hypothesis \ref{h4.3}, and in the applications in the following sections, the index indeed turns out to be an integer. For the notion of a generalized index of unbounded meromorphic operator-valued functions and its applications to Dirichlet-to-Neumann maps and abstract Weyl--Titchmarsh $M$-functions we refer to \cite{BGHN16}.

We also note that in the special case of an analytic function $M:\Omega \to \cB(\cH)$ and 
$z_0$ a zero of finite-type 
of $M(\cdot)$, it follows from Theorem~\ref{t2.4} that $M(\cdot)^{-1}$ is finitely meromorphic on 
$D(z_0; \varepsilon_0)$ for some $0 < \varepsilon_0$ sufficiently small.
Therefore, \eqref{3.10j} implies that the index of $M(\cdot)$ in \eqref{4.3j} coincides with the algebraic multiplicity of the zero of $M(\cdot)$ at $z_0$, 
\begin{equation}\label{coincide}
\ind_{C(z_0; \varepsilon)}(M(\cdot))= m_a(z_0; M(\cdot)).
\end{equation}

Moreover, if $M_j(\cdot)$, $j=1,2$, are $\cB(\cH)$-valued finitely meromorphic functions that are boundedly invertible on 
$D(z_0; \varepsilon_0) \backslash \{z_0\}$ for some $z_0\in\Omega$ and  $0 < \varepsilon_0$ sufficiently small, and $M_j(\cdot)^{-1}$, $j=1,2$, are finitely meromorphic on 
$D(z_0; \varepsilon_0) \backslash \{z_0\}$, then employing the identity
\begin{align}
\begin{split} 
[M_1(z) M_2(z)]' [M_1(z) M_2(z)]^{-1} &= M_1'(z) M_1(z)^{-1}   \\
& \quad + M_1(z) [M_2'(z) M_2(z)^{-1}] M_1(z)^{-1},   \lb{4.10}
\end{split}
\end{align} 
and taking the trace on either side yields the familiar formula
\begin{equation}
\ind_{C(z_0; \varepsilon)}(M_1(\cdot) M_2(\cdot)) = \ind_{C(z_0; \varepsilon)}(M_1(\cdot)) 
+ \ind_{C(z_0; \varepsilon)}(M_2(\cdot)),   \lb{4.11} 
\end{equation}
in particular,
\begin{equation}
\ind_{C(z_0; \varepsilon)}\big(M(\cdot)^{-1}\big) = - \ind_{C(z_0; \varepsilon)}(M(\cdot)).  
\lb{4.12} 
\end{equation}
For interesting applications of this circle of ideas see also \cite{AKL09}, \cite{BES12}, 
\cite{BBR14}, \cite{Si89}. 

Next we strengthen Hypothesis \ref{h4.1} as follows: 

\begin{hypothesis} \lb{h4.3} 
Suppose $M(\cdot)$ satisfies Hypothesis \ref{h4.1}
and assume that for every $z_0\in\cD_0$ the operator $M_0(z_0)$ in the Laurent series in \eqref{4.1j}
is a Fredholm operator, that is,
\begin{equation}
 M_0(z_0)\in \Phi(\cH),\qquad z_0\in\cD_0.
\end{equation}
In addition, suppose that 
\begin{equation}
M(z) \in \Phi(\cH), \quad z \in \Omega \backslash \cD_0. 
\end{equation}
\end{hypothesis}

One then recalls the meromorphic Fredholm theorem in the following form:

\begin{theorem} [\cite{GS71}, \cite{Ho70}, {\cite[Theorem\ XIII.13]{RS78}}, \cite{RV69}, 
\cite{St68}] \lb{t3.2}  ${}$ \\
Assume that $M(\cdot)$ satisfies Hypothesis \ref{h4.3}. Then either \\[1mm] 
$(i)$ $M(z)$ is not boundedly invertible for any $z \in \Omega\backslash\cD_0$, \\[1mm]
or else, \\[1mm]
$(ii)$ $M(\cdot)^{-1}$ is finitely meromorphic on $\Omega$. More precisely, there exists a discrete 
subset $\cD_1 \subset \Omega$ $($possibly, $\cD_1 = \emptyset$$)$ such that 
$M(z)^{-1} \in \cB(\cH)$ for all $z \in \Omega\backslash\{\cD_0 \cup \cD_1\}$, $M(\cdot)^{-1}$ extends to 
an analytic function on $\Omega\backslash\cD_1$, meromorphic on $\Omega$ such that
\begin{equation}\lb{4.5a}
M(z)^{-1} \in \Phi(\cH) \, \text{ for all } \, z \in \Omega\backslash\cD_1, 
\end{equation}
and if $z_1 \in \cD_1$, then 
\begin{equation}
M(z)^{-1} = \sum_{k= - N_0}^{\infty} (z - z_1)^{k} D_k(z_1), \quad 
0 < |z - z_1| < \varepsilon_0,    \lb{4.5}
\end{equation}
for some $N_0=N_0(z_1) \in \bbN$ and some $0 < \varepsilon_0=\varepsilon_0(z_1)$ 
sufficiently small, 
with  
\begin{equation} \lb{4.5b} 
D_k(z_1) \in \cF(\cH), \; -N_0 \leq k \leq -1,  \quad   D_k(z_1) \in \cB(\cH), \; k \in \bbN_0. 
\end{equation}
In addition,
\begin{equation}\label{jussi4fred}
 D_0(z_1) \in \Phi(\cH).
\end{equation}
Finally, if $[I_{\cH} - M(z)] \in \cB_{\infty}(\cH)$ for all $z \in \Omega\backslash\cD_0$, then 
\begin{equation}
\big[I_{\cH} - M(z)^{-1}\big] \in \cB_{\infty}(\cH),\; z \in \Omega\backslash\cD_1, 
\quad [I_{\cH} - D_0(z_1)] \in \cB_{\infty}(\cH), \; z_1 \in \cD_1.  
\end{equation} 
\end{theorem}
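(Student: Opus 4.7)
The strategy is to reduce Theorem~\ref{t3.2} to the analytic Fredholm theorem (Theorem~\ref{t2.4}) by means of a local factorization that absorbs the principal part of $M(\cdot)$ at each pole in $\cD_0$, after which a standard connectedness argument delivers the global dichotomy. Fix $z_0\in\cD_0$ and split $M(z) = M_{\rm sing}(z) + M_{\rm an}(z)$ with $M_{\rm sing}(z) = \sum_{k=-N_0}^{-1}(z-z_0)^k M_k(z_0)$ and $M_{\rm an}(\cdot)$ analytic with $M_{\rm an}(z_0) = M_0(z_0)\in\Phi(\cH)$. Because each $M_k(z_0)$, $-N_0 \le k \le -1$, is of finite rank, the subspace
\[
V := \sum_{k=-N_0}^{-1}\bigl(\ran(M_k(z_0)) + \ran(M_k(z_0)^*)\bigr)
\]
is finite-dimensional; letting $\Pi\in\cB(\cH)$ be the orthogonal projection onto $V$ yields the intertwining identities $\Pi M_k(z_0) = M_k(z_0) = M_k(z_0)\Pi$ for $-N_0 \le k \le -1$. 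Define
\[
N(z) := (z-z_0)^{N_0}\Pi + (I_{\cH}-\Pi), \qquad A(z) := N(z)M(z).
\]
The intertwining relations force all negative powers in $NM$ to cancel, so $A(\cdot)$ is analytic on some neighborhood $\cN(z_0)\subset\Omega$ of $z_0$, with
\[
A(z_0) = M_{-N_0}(z_0) + (I_{\cH}-\Pi)M_0(z_0) = M_0(z_0) + \bigl[M_{-N_0}(z_0) - \Pi M_0(z_0)\bigr],
\]
a finite-rank perturbation of $M_0(z_0)\in\Phi(\cH)$, hence $A(z_0)\in\Phi(\cH)$. For $z\in\cN(z_0)\setminus\{z_0\}$, $N(z)$ is boundedly invertible with inverse $(z-z_0)^{-N_0}\Pi+(I_{\cH}-\Pi)$ and $M(z)\in\Phi(\cH)$ by Hypothesis~\ref{h4.3}, so $A(z)\in\Phi(\cH)$. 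Thus $A$ satisfies Hypothesis~\ref{h2.3} on $\cN(z_0)$.

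Theorem~\ref{t2.4} applied to $A$ on $\cN(z_0)$ produces a local dichotomy: either $A(z)$ is nowhere boundedly invertible, in which case $M(z) = N(z)^{-1}A(z)$ is nowhere boundedly invertible on $\cN(z_0)\setminus\{z_0\}$; or $A(\cdot)^{-1}$ is finitely meromorphic on $\cN(z_0)$ with the Laurent structure \eqref{2.9j}--\eqref{2.10j} and \eqref{2.10jfredholm}, in which case $M(\cdot)^{-1} = A(\cdot)^{-1}N(\cdot)$ inherits finite meromorphicity, the Laurent expansion \eqref{4.5}--\eqref{4.5b}, and, by combining \eqref{2.10jfredholm} with the explicit form of $N(\cdot)$, the Fredholm property \eqref{jussi4fred} of $D_0(z_1)$. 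At points $z_0\in\Omega\setminus\cD_0$ the same conclusion follows more directly by applying Theorem~\ref{t2.4} to $M$ itself. Globally, the set $U\subset\Omega$ at which case~(ii) holds locally is both open and closed --- its complement consists of points near which $M$ is nowhere boundedly invertible, also an open condition --- so connectedness of $\Omega$ forces $U=\Omega$ or $U=\emptyset$; the set $\cD_1$ is discrete by local finiteness of poles. The Fredholm property \eqref{4.5a} on $\Omega\setminus\cD_1$ is immediate from $M(z)M(z)^{-1} = I_{\cH}$. Preservation of compactness is handled by writing
\[
I_{\cH} - A(z) = (I_{\cH} - N(z)) + N(z)(I_{\cH} - M(z)),
\]
observing that $I_{\cH} - N(z) = (1-(z-z_0)^{N_0})\Pi$ is of finite rank, invoking the compactness clause of Theorem~\ref{t2.4} for $A$, and transferring the conclusion back to $M^{-1} = A^{-1}N$.

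The main obstacle is the construction in the first paragraph: manufacturing an analytic, everywhere-Fredholm surrogate $A(\cdot)$ for the meromorphic $M(\cdot)$ in a neighborhood of a pole. The enabling observation is that all singular Laurent coefficients $M_k(z_0)$ share a common finite-dimensional ``support'' $V$ containing their ranges and co-ranges; a single orthogonal projection $\Pi$ onto $V$ then isolates the singular direction, while the scalar-like factor $N(z)$ multiplies $(z-z_0)^{N_0}$ into that direction and acts trivially off it. Fredholmness of $A(z_0)$ reduces to a finite-rank perturbation of $M_0(z_0)\in\Phi(\cH)$, and the subsequent global patching, discreteness of $\cD_1$, and transfer of compactness and Fredholm properties through the identity $M^{-1} = A^{-1}N$ are then routine applications of Theorem~\ref{t2.4} and connectedness of $\Omega$.
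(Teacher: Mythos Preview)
The paper does not supply its own proof of Theorem~\ref{t3.2}; it is stated with references to \cite{GS71}, \cite{Ho70}, \cite[Theorem~XIII.13]{RS78}, \cite{RV69}, \cite{St68} and then used as a black box. So there is nothing in the paper to compare your argument against directly.

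Your reduction to the analytic Fredholm theorem via the regularizing left factor $N(z)=(z-z_0)^{N_0}\Pi+(I_\cH-\Pi)$ is the standard device (it is essentially the approach in Gohberg--Sigal and in Steinberg), and the computations you indicate are correct: the intertwining $\Pi M_k(z_0)=M_k(z_0)=M_k(z_0)\Pi$ follows from $\ran(M_k(z_0))\subset V$ and $\ker(M_k(z_0))\supset V^\perp$, the analyticity of $A=NM$ near $z_0$ and $A(z_0)\in\Phi(\cH)$ are verified as you wrote, and the transfer of the Fredholm and compactness conclusions from $A^{-1}$ to $M^{-1}=A^{-1}N$ goes through because every discrepancy is a finite-rank correction. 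The open--closed connectedness argument is also fine once the local dichotomy is in place at every point of $\Omega$.

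One small point to tighten: your one-line justification of \eqref{4.5a}, ``immediate from $M(z)M(z)^{-1}=I_\cH$,'' covers only $z\in\Omega\setminus(\cD_0\cup\cD_1)$; at points $z_0\in\cD_0\setminus\cD_1$ the operator $M(z_0)$ is not defined and the analytically continued value $M^{-1}(z_0)$ is no longer automatically invertible. However, your local factorization already handles this: the constant Laurent coefficient of $A^{-1}N$ at $z_0$ equals $C_0(I_\cH-\Pi)+C_{-N_0}\Pi$, a finite-rank perturbation of the Fredholm operator $C_0$, hence Fredholm. So the gap is expository, not mathematical.
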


Assume Hypothesis \ref{h4.3}, let $z_0\in \Omega$ and suppose that $M(\cdot)$ is boundedly 
invertible on 
$D(z_0; \varepsilon_0) \backslash \{z_0\}$ for some $0 < \varepsilon_0$ sufficiently small (i.e., case $(ii)$ in Theorem \ref{t3.2} applies).
Then the function $M(\cdot)^{-1}$ is finitely meromorphic on $D(z_0; \varepsilon_0)$ and it follows from
the operator-valued version of the argument principle proved in \cite{GS71} (see also 
\cite[Theorem\ 4.4.1]{GL09}) that 
\begin{equation}\label{4.9}
\ind_{C(z_0; \varepsilon)}(M(\cdot)) \in \bbZ.
\end{equation}

\section{Abstract Perturbation Theory and Applications to 
Birman--Schwinger-Type Operators} \lb{s5}

In this section, following Kato \cite{Ka66}, Konno and Kuroda
\cite{KK66}, and Howland \cite{Ho70}, we first recall a class
of factorable non-self-adjoint perturbations of a given unperturbed
non-self-adjoint operator. We recall the treatment in \cite{GLMZ05} 
(in which $H_0$ is explicitly permitted to be non-self-adjoint, 
cf.\ Hypothesis \ref{h5.1}\,$(i)$ below) and refer to the latter for detailed proofs. 

The principal result of this section then consists of the index formulas in 
Theorem~\ref{t5.5},
which are variants of \cite[Theorem 4.5 and Theorem 5.5]{GHN15}.
We start with the following set of hypotheses.


\begin{hypothesis} \label{h5.1}
$(i)$ Suppose that $H_0\colon\dom(H_0)\to\cH$,
$\dom(H_0)\subseteq\cH$, is a densely defined, closed, linear operator
in $\cH$ with nonempty resolvent set,
\begin{equation}\label{5.1j}
\rho(H_0)\neq\emptyset, 
\end{equation}
$V_1 \colon \dom(V_1)\to\cK$, $\dom(V_1)\subseteq\cH$, a densely defined,
closed, linear operator from $\cH$ to $\cK$, and
$V_2 \colon \dom(V_2)\to\cK$, $\dom(V_2)\subseteq\cH$, a densely
defined, closed, linear operator from $\cH$ to $\cK$ such that
\begin{equation}\label{5.2j}
\dom(V_1)\supseteq\dom(H_0), \quad \dom(V_2)\supseteq\dom(H_0^*).
\end{equation}
In the following we denote
\begin{equation}\label{5.3j}
R_0(z)=(H_0-zI_{\cH})^{-1}, \quad z\in \rho(H_0). 
\end{equation}
$(ii)$ For some (and hence for all) $z\in\rho(H_0)$, the operator
$- V_1R_0(z)V_2^*$, defined on $\dom(V_2^*)$, has a
bounded extension in $\cK$, denoted by $K(z)$,
\begin{equation}\label{5.4j}
K(z)=-\ol{V_1 R_0(z) V_2^*} \in\cB(\cK). 
\end{equation}
$(iii)$ $1\in\rho(K(\zeta_0))$ for some $\zeta_0\in \rho(H_0)$.
\end{hypothesis}

Next, following Kato \cite{Ka66}, one introduces
\begin{equation}\label{5.5j}
R(z)=R_0(z)-\ol{R_0(z) V_2^*}[I_{\cK}-K(z)]^{-1} V_1 R_0(z)
\end{equation}
for $z\in\{\zeta\in\rho(H_0)\,|\, 1\in\rho(K(\zeta))\}$.
\begin{theorem} [\cite{Ka66}] \label{t5.2}
Assume Hypothesis \ref{h5.1} and 
$z\in\{\zeta\in\rho(H_0)\,|\, 1\in\rho(K(\zeta))\}$. Then, $R(z)$
in \eqref{5.5j} defines a densely defined, closed, linear
operator $H$ in $\cH$ by
\begin{equation}\label{5.6j}
R(z)=(H-zI_{\cH})^{-1}. 
\end{equation}
Moreover,
\begin{equation}\label{5.7j}
V_1 R(z),  V_2 R(z)^* \in \cB(\cH,\cK) 
\end{equation}
and
\begin{align}
\begin{split} 
R(z)&=R_0(z)-\ol{R(z) V_2^*} V_1 R_0(z)  \\
&=R_0(z)-\ol{R_0(z) V_2^*} V_1 R(z).   \lb{5.9j} 
\end{split} 
\end{align}
Finally, $H$ is an extension of the operator
\begin{equation}\label{5.10j}
(H_0 + V_2^* V_1)\upharpoonright \bigl(\dom(H_0)\cap\dom(V_2^* V_1)\bigr),
\end{equation}
where the set $\dom(H_0)\cap\dom(V_2^* V_1)$ may consist of $\{0\}$ only.
\end{theorem}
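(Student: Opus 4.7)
The plan is to recognize $R(\cdot)$ in \eqref{5.5j} as a pseudoresolvent with trivial kernel and dense range on the set $\{\zeta\in\rho(H_0)\,|\,1\in\rho(K(\zeta))\}$; once this is established, the standard correspondence between pseudoresolvents and resolvents of closed densely defined operators yields a unique $H$ with $(H-zI_{\cH})^{-1}=R(z)$, and the remaining identities fall out algebraically.

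The first step is to collect the boundedness facts that make \eqref{5.5j} meaningful. Because $R_0(z)$ maps $\cH$ into $\dom(H_0)\subseteq\dom(V_1)$, the closed graph theorem forces $V_1 R_0(z)\in\cB(\cH,\cK)$; the same argument applied to $H_0^*$ gives $V_2 R_0(z)^*\in\cB(\cH,\cK)$, and taking adjoints produces $\overline{R_0(z)V_2^*}=(V_2 R_0(z)^*)^*\in\cB(\cK,\cH)$. A short computation on the dense subspace $\dom(V_2^*)$ then upgrades the defining identity \eqref{5.4j} to the bounded-operator identity $V_1\,\overline{R_0(z)V_2^*}=-K(z)$, a tool used repeatedly below.

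For the pseudoresolvent identity I would first derive $K(z_1)-K(z_2)=(z_1-z_2)V_1 R_0(z_1)\overline{R_0(z_2)V_2^*}$ from the first resolvent equation for $R_0$, combine it with the algebraic formula $[I-K(z_1)]^{-1}-[I-K(z_2)]^{-1}=[I-K(z_1)]^{-1}(K(z_1)-K(z_2))[I-K(z_2)]^{-1}$, and substitute into \eqref{5.5j} to obtain $R(z_1)-R(z_2)=(z_1-z_2)R(z_1)R(z_2)$. For injectivity, apply $V_1$ to \eqref{5.5j}; the identity just noted yields $V_1 R(z)=[I-K(z)]^{-1}V_1 R_0(z)$, so $R(z)f=0$ forces $V_1 R_0(z)f=0$, and then \eqref{5.5j} itself reduces to $R_0(z)f=0$, i.e.\ $f=0$. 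Taking adjoints throughout gives the analogous representation $R(z)^*=R_0(z)^*-\overline{R_0(z)^*V_1^*}[I-K(z)^*]^{-1}V_2 R_0(z)^*$, to which the identical argument applies and yields $\ker(R(z)^*)=\{0\}$; hence $\ran(R(z))$ is dense. The pseudoresolvent theorem now produces the closed densely defined $H$ with $(H-zI_{\cH})^{-1}=R(z)$, establishing \eqref{5.6j}.

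The remaining assertions follow with little extra work. The formula $V_1 R(z)=[I-K(z)]^{-1}V_1 R_0(z)$ already displays $V_1 R(z)$ as a product of bounded operators, and $V_2 R(z)^*$ is handled symmetrically, giving \eqref{5.7j}; feeding these back into \eqref{5.5j} produces both lines of \eqref{5.9j}. For the extension property, take $f\in\dom(H_0)\cap\dom(V_2^*V_1)$, so that $V_1 f\in\dom(V_2^*)$; set $g=(H_0+V_2^*V_1-zI_{\cH})f$ and expand $R(z)g$ via \eqref{5.5j}, noting $R_0(z)V_2^*V_1 f=\overline{R_0(z)V_2^*}V_1 f$ and $V_1 R_0(z)g=[I-K(z)]V_1 f$; the terms telescope to $R(z)g=f$. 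The main obstacle throughout is the careful handling of the closures $\overline{R_0(z)V_2^*}$ and of compositions of closed with bounded operators — each identity must first be verified on $\dom(V_2^*)$ where $V_2^*$ acts directly, and then extended by continuity using the closed-graph bounds obtained in the first step. Once these passages to the closure are justified, the rest of the argument is algebraic bookkeeping.
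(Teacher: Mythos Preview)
The paper does not supply its own proof of Theorem~\ref{t5.2}; it merely records the result, attributes it to Kato~\cite{Ka66}, and refers the reader to \cite{GLMZ05} for the details. Your outline is precisely the standard pseudoresolvent argument used in those references: verify the first resolvent equation for $R(\cdot)$, check $\ker(R(z))=\{0\}$ and $\overline{\ran(R(z))}=\cH$ via the factorization $V_1R(z)=[I_\cK-K(z)]^{-1}V_1R_0(z)$ and its adjoint analogue, invoke the pseudoresolvent theorem to produce $H$, and then read off \eqref{5.7j}, \eqref{5.9j}, and the extension property algebraically. Apart from a harmless sign slip in the displayed formula for $K(z_1)-K(z_2)$ (compare Lemma~\ref{l5.3}, which gives $K(z_1)-K(z_2)=-(z_1-z_2)V_1R_0(z_1)\overline{R_0(z_2)V_2^*}$), the argument is correct and matches the approach the paper cites.
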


Similarly, using the symmetry between $H_0$ and $H$ inherent in Kato's formalism 
(cf.\ \cite[Sects.~2, 3]{GLMZ05}) one also derives
\begin{equation}
\ol{V_1 R(z) V_2^*} \in \cB(\cK), \quad z \in \rho(H),
\end{equation}
and
\begin{equation}
I_{\cK} - \ol{V_1 R(z) V_2^*} = [I_{\cK} - K(z)]^{-1}, \quad 
z \in \{\zeta \in \rho(H_0) \, | \, 1 \in \rho(K(\zeta))\}.     \lb{5.10f}
\end{equation}

For our purposes the following lemma is useful. 

\begin{lemma}\label{l5.3}
Assume Hypothesis \ref{h5.1} and let $z_1,z_2\in\rho(H_0)$. Then
\begin{equation}\label{5.11j}
 K(z_1)=K(z_2)+(z_2-z_1)V_1R_0(z_1)\overline{R_0(z_2)V_2^*}
\end{equation}
and if, in addition, $z_1,z_2\in\rho(H)$ then
\begin{equation}\label{5.12j}
 [I_\cK-K(z_1)]^{-1} = [I_\cK-K(z_2)]^{-1} + (z_2-z_1)V_1R(z_1)\overline{R(z_2)V_2^*}.
\end{equation}
\end{lemma}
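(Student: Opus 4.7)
The plan is to derive both \eqref{5.11j} and \eqref{5.12j} from the ordinary first resolvent identity---applied first to $R_0(\cdot)$ and then to $R(\cdot)$---followed by a careful passage to closures on the dense subspace $\dom(V_2^*)\subseteq\cK$. Two boundedness facts will be used throughout: since $\ran(R_0(z))=\dom(H_0)\subseteq\dom(V_1)$ and $V_1$ is closed, the closed graph theorem yields $V_1 R_0(z)\in\cB(\cH,\cK)$; an analogous argument applied to $V_2 R_0(z)^*$ (well-defined because $\ran(R_0(z)^*)=\dom(H_0^*)\subseteq\dom(V_2)$ by \eqref{5.2j}) gives by duality $\ol{R_0(z)V_2^*}\in\cB(\cK,\cH)$. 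The corresponding operators $V_1 R(z)\in\cB(\cH,\cK)$ and $\ol{R(z)V_2^*}\in\cB(\cK,\cH)$ are supplied by \eqref{5.7j} and the symmetric treatment of $H_0$ and $H$ recorded just after Theorem~\ref{t5.2}.

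For \eqref{5.11j}, I would start from the first resolvent identity
\[
R_0(z_1)-R_0(z_2)=(z_1-z_2)R_0(z_1)R_0(z_2),
\]
multiply on the left by $-V_1$ and on the right by $V_2^*$, and evaluate on $\phi\in\dom(V_2^*)$. Rearranging signs, this reads
\[
-V_1 R_0(z_1)V_2^*=-V_1 R_0(z_2)V_2^*+(z_2-z_1)V_1 R_0(z_1)R_0(z_2)V_2^*
\]
on the dense subspace $\dom(V_2^*)$. Passing to closures via the boundedness facts above, and recalling the definition $K(z)=-\ol{V_1 R_0(z)V_2^*}$ in \eqref{5.4j}, yields \eqref{5.11j}.

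For \eqref{5.12j}, I would run the same argument with $R(\cdot)$ in place of $R_0(\cdot)$: multiplying the identity $R(z_1)-R(z_2)=(z_1-z_2)R(z_1)R(z_2)$ by $V_1$ on the left and $V_2^*$ on the right, restricting to $\dom(V_2^*)$, and passing to closures produces
\[
\ol{V_1 R(z_1)V_2^*}-\ol{V_1 R(z_2)V_2^*}=(z_1-z_2)V_1 R(z_1)\ol{R(z_2)V_2^*}.
\]
Substituting \eqref{5.10f} in the equivalent form $\ol{V_1 R(z)V_2^*}=I_\cK-[I_\cK-K(z)]^{-1}$ converts the left-hand side into $[I_\cK-K(z_2)]^{-1}-[I_\cK-K(z_1)]^{-1}$, and rearranging gives \eqref{5.12j}.

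The only genuine technical issue---and therefore the main (albeit minor) obstacle---is the justification of the closure step: one must verify that the formal product $V_1 R_0(z_1)R_0(z_2)V_2^*$ (and analogously with $R$ in place of $R_0$) extends unambiguously from $\dom(V_2^*)$ to a bounded operator on $\cK$. This is routine since, after closure, the leftmost factor acts as a bounded operator from $\cH$ to $\cK$ while the rightmost factor acts as a bounded operator from $\cK$ to $\cH$, so each restricted identity admits a unique continuous extension to all of $\cK$.
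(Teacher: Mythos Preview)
Your proof is correct and follows essentially the same route as the paper: both identities are derived from the first resolvent equation (for $R_0(\cdot)$ and $R(\cdot)$, respectively) together with \eqref{5.4j} and \eqref{5.10f}. The paper's proof is a one-line appeal to these ingredients, whereas you supply the details of the closure step and the relevant boundedness facts, but the underlying argument is the same.
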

\begin{proof}
Formula \eqref{5.11j} follows from \eqref{5.4j} and the resolvent equation for $R_0(z)$, 
$z \in \rho(H_0)$; similarly, formula \eqref{5.12j} is clear from \eqref{5.10f} and the resolvent 
equation for $R(z)$, $z \in \rho(H)$.
\end{proof}

Note also that \eqref{5.11j} yields the useful formula
\begin{equation}\label{5.11ju}
 K'(z)=-V_1R_0(z)\overline{R_0(z)V_2^*},\qquad z\in\rho(H_0).
\end{equation}

The next result represents an abstract version
of (a variant of) the Birman--Schwinger principle due to Birman \cite{Bi61}
and Schwinger \cite{Sc61} (cf.\ also \cite{BS91}, \cite{GH87},
\cite{Kl82}, \cite{KS80}, \cite{Ne83}, \cite{Ra80}, \cite{Se74}, \cite[Ch.\ III]{Si71}, and
\cite{Si77a}). It is due to Konno and Kuroda \cite{KK66} in the case where $H_0$ is self-adjoint. 
For the general case see \cite{GLMZ05}. 

\begin{theorem}[\cite{KK66}] \lb{t5.4}
Assume Hypothesis \ref{h5.1} and let $z_0\in\rho(H_0)$. Then,
\begin{equation}\label{sigmapj}
 z_0\in\sigma_p(H)\,\text{ if and only if }\, 1\in\sigma_p(K(z_0)),
\end{equation}
and
\begin{equation}\label{rhoj}
 z_0\in\rho(H)\,\text{ if and only if }\, 1\in\rho(K(z_0)).
\end{equation}

More precisely, if in \eqref{sigmapj} one has $Hf=z_0f$ for some $f\in\dom(H)$, $f\not=0$, then 
\begin{equation}\label{5.16j}
0\not= g = [I_{\cK}-K(z_1)]^{-1} V_1 R_0(z_1)f = (z_0-z_1)^{-1} V_1 f,
\end{equation}
where $z_1\in\{\zeta\in\rho(H_0)\,|\,
1\in\rho(K(\zeta))\}$, $z_1\neq z_0$, satisfies $K(z_0)g=g$,
and conversely, if in \eqref{sigmapj} one has $K(z_0)g=g$ for some $g\in\cK$, $g\not=0$, then 
\begin{equation}\label{5.17j}
0\neq f=-\ol{R_0(z_0) V_2^*}g\in\dom(H)
\end{equation}
satisfies $H f=z_0 f$.
\end{theorem}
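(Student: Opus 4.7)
The plan is to split the statement into the two equivalences \eqref{sigmapj} and \eqref{rhoj}, proving the point-spectrum equivalence (together with the explicit eigenvector correspondence \eqref{5.16j}--\eqref{5.17j}) first and then deducing the resolvent-set equivalence from it. Throughout I fix an auxiliary point $z_1\in\rho(H_0)$ with $1\in\rho(K(z_1))$ and $z_1\neq z_0$; such a $z_1$ exists by Hypothesis \ref{h5.1}\,$(iii)$, and by Theorem \ref{t5.2} it automatically lies in $\rho(H)$, so that $R(z_1)$, $V_1R(z_1)$ and $\overline{R(z_1)V_2^*}$ are all bounded.

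For the forward implication of \eqref{sigmapj}, starting from $Hf=z_0f$ with $f\neq 0$, the identity $f=(z_0-z_1)R(z_1)f$ motivates setting $g:=V_1R(z_1)f$, which immediately recovers $(z_0-z_1)^{-1}V_1f$. A short calculation based on substituting \eqref{5.5j} and using the closedness of $V_1$ (needed to push $V_1$ inside $\overline{R_0(z_1)V_2^*}$ and recover $-K(z_1)$) yields the auxiliary identity $V_1R(z_1)=[I_\cK-K(z_1)]^{-1}V_1R_0(z_1)$, producing simultaneously the alternative representation of $g$ from \eqref{5.16j}. The relation $K(z_0)g=g$ then becomes a direct computation: Lemma \ref{l5.3} expresses $K(z_0)-K(z_1)$ in terms of $V_1R_0(z_0)\overline{R_0(z_1)V_2^*}$, and the first resolvent identity for $R_0$, combined with the formula for $\overline{R_0(z_1)V_2^*}g$ obtained by reading \eqref{5.5j} applied to $f$ backward, collapses $K(z_0)g-g$ to zero. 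Nonvanishing of $g$: were $g=0$, then \eqref{5.5j} would force $R(z_1)f=R_0(z_1)f$, placing $f\in\dom(H_0)$ with $H_0f=z_0f$, contradicting $z_0\in\rho(H_0)$.

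For the converse implication, starting from $K(z_0)g=g$ with $g\neq 0$, I set $f:=-\overline{R_0(z_0)V_2^*}g\in\cH$ (well-defined since $\overline{R_0(z_0)V_2^*}=(V_2R_0(z_0)^*)^*\in\cB(\cK,\cH)$) and aim to establish $R(z_1)f=(z_0-z_1)^{-1}f$, which automatically places $f\in\dom(H)$ and forces $Hf=z_0f$. Applying \eqref{5.5j} to $f$ requires two ingredients: first, the expression of $R_0(z_1)f$ via the first resolvent identity lifted to the closures $\overline{R_0(\cdot)V_2^*}$; second, the evaluation of $V_1R_0(z_1)f$, which by Lemma \ref{l5.3} and the hypothesis $K(z_0)g=g$ equals $(z_0-z_1)^{-1}[I_\cK-K(z_1)]g$, so that the composite $[I_\cK-K(z_1)]^{-1}V_1R_0(z_1)f$ appearing in \eqref{5.5j} simplifies to $(z_0-z_1)^{-1}g$. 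Substituting both ingredients back into \eqref{5.5j} telescopes to $R(z_1)f=(z_0-z_1)^{-1}f$. For nonvanishing of $f$: were $f=0$, then Lemma \ref{l5.3} gives $K(z_1)g=K(z_0)g=g$, contradicting $1\in\rho(K(z_1))$.

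The equivalence \eqref{rhoj} then follows quickly. The implication $1\in\rho(K(z_0))\Rightarrow z_0\in\rho(H)$ is already recorded in Theorem \ref{t5.2}. For the converse, given $z_0\in\rho(H)\cap\rho(H_0)$, injectivity of $I_\cK-K(z_0)$ is the contrapositive of the forward direction of \eqref{sigmapj} just established, and a bounded inverse is supplied by the candidate
\[
B:=[I_\cK-K(z_1)]^{-1}+(z_1-z_0)V_1R(z_0)\overline{R(z_1)V_2^*},
\]
modelled on \eqref{5.12j}; verification of $B[I_\cK-K(z_0)]=[I_\cK-K(z_0)]B=I_\cK$ reduces to combining Lemma \ref{l5.3} with the resolvent equation for $R(\cdot)$. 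The main obstacle throughout is the bookkeeping of the closure bars on unbounded products such as $\overline{R_0(z_1)R_0(z_0)V_2^*}$ and $V_1\overline{R_0(\cdot)V_2^*}$: the resolvent identities must be extended from $\dom(V_2^*)$ to the full closures, which relies on the boundedness of $R_0(\cdot)$ and the closedness of $V_1$ to interchange $V_1$ with the closure operation.
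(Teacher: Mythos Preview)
The paper does not include its own proof of this theorem; it is attributed to \cite{KK66} (with the non-self-adjoint extension referred to \cite{GLMZ05}), so there is no in-text argument to compare against. Your proposal is correct and is precisely the standard argument one finds in those references: the eigenvector correspondence is established via the second resolvent identity \eqref{5.9j} applied at the auxiliary point $z_1$, combined with the resolvent-difference formula for $K(\cdot)$ from Lemma~\ref{l5.3}, and the resolvent-set equivalence is read off from Theorem~\ref{t5.2} in one direction and from an explicit inverse in the other.

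Two minor remarks on points you flagged yourself. First, the identity $V_1\,\ol{R_0(z_1)V_2^*}=-K(z_1)$ (needed for $V_1R(z_1)=[I_\cK-K(z_1)]^{-1}V_1R_0(z_1)$) indeed follows from the closedness of $V_1$ exactly as you indicate: on $\dom(V_2^*)$ one has $V_1R_0(z_1)V_2^*=-K(z_1)$, and passing to limits using that both $\ol{R_0(z_1)V_2^*}$ and $K(z_1)$ are bounded shows $\ran\big(\ol{R_0(z_1)V_2^*}\big)\subset\dom(V_1)$ with the desired identity. Second, for the implication $z_0\in\rho(H)\Rightarrow 1\in\rho(K(z_0))$, rather than verifying your candidate $B$ directly (which works but is a bit laborious), it is slightly cleaner to use the explicit two-sided inverse $I_\cK-\ol{V_1R(z_0)V_2^*}$ suggested by \eqref{5.10f}: the identity $(I_\cK-K(z_0))(I_\cK-\ol{V_1R(z_0)V_2^*})=I_\cK$ follows at once from \eqref{5.9j} after left-multiplying by $V_1$ and right-closing against $V_2^*$, and the other side is analogous. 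Either route completes the argument.
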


If, in addition to Hypothesis~\ref{h5.1}, it is assumed that $I_\cK - K(z)$ is a Fredholm operator for all $z\in\rho(H_0)$, then by \cite[Theorem 2.7]{GHN15} (see also \cite[Theorem 3.2]{GLMZ05})
the geometric multiplicity of an eigenvalue $z_0$ of $H$ coincides with the geometric multiplicity of the eigenvalue $1$ of $K(z_0)$ and is finite, 
\begin{equation}\label{5.18j}
\begin{split}
m_g(z_0;H) &= \dim(\ker(H-z_0I_{\cH}))\\
&=\dim(\ker(I_{\cK}-K(z_0))) = m_g(1;K(z_0)) <\infty.
\end{split}
\end{equation}

The next theorem is the main result in this section. Item $(i)$ is a slight extension (cf.\ \cite{GHN15}) 
of a multiplicity result due to Latushkin and Sukhtyaev \cite{LS10}, and item $(ii)$ resembles an 
analog of the Weinstein--Aronszajn-type formula (cf., e.g., \cite{AB70}, \cite{Ho70}, 
\cite[Sect.\ IV.6]{Ka80}, \cite{Ku61}, \cite[Sect.\ 9.3]{WS72}) in the case where $H$ and $H_0$ have common discrete eigenvalues. 

\begin{theorem}\label{t5.5}
Assume Hypothesis~\ref{h5.1}. Then the following assertions $(i)$--$(iv)$ hold:\\[1mm] 
$(i)$ If $z_0 \in \rho(H_0) \cap \sigma_d(H)$, then 
 the index formula
\begin{equation}\label{indi0}
\ind_{C(z_0; \varepsilon)}(I_\cK-K(\cdot))=m_a(z_0;H)
\end{equation}
holds for $\varepsilon>0$ sufficiently small. Furthermore, 
$z_0$ is a zero of finite-type of the function $I_{\cK} - K(\cdot)$, and hence
\begin{equation}\label{indinu}
\nu(z_0; I_{\cK} - K(\cdot))=m_a(z_0; I_{\cK} - K(\cdot))=   \ind_{C(z_0; \varepsilon)}(I_\cK-K(\cdot)).   
\end{equation} 
$(ii)$  If  $z_0 \in \sigma_d(H_0) \cap \sigma_d(H)$,
then the index formula
\begin{equation}\label{indi}
\ind_{C(z_0; \varepsilon)}(I_\cK- K(\cdot))=m_a(z_0;H)-m_a(z_0;H_0)
\end{equation}
holds for $\varepsilon>0$ sufficiently small. \\[1mm]  
$(iii)$ Assume in addition that $K(z) \in \cB_{\infty}(\cK)$ for all $z \in \rho(H_0)$ and either that 
$\rho(H_0)$ is connected, or else, that Hypothesis \ref{h5.1}\,$(iii)$, that is, $1 \in \rho(K(\zeta))$, 
holds for some $\zeta \in \bbC$ lying in each of the connected components of $\rho(H_0)$. If 
$z_0 \in \sigma_d(H_0)$, then $z_0 \in (\sigma_d(H) \cup \rho(H))$ and hence the index formula 
\eqref{indi} holds. \\[1mm] 
$(iv)$ Assume in addition that $K(z) \in \cB_{\infty}(\cK)$ for all $z \in \rho(H_0)$ and suppose 
that $D(z_0;\varepsilon_0) \backslash \{z_0\}\cap\sigma(H)=\emptyset$ 
for $0 < \varepsilon_0$ sufficiently small. If $z_0 \in \sigma_d(H_0)$, then $z_0 \in (\sigma_d(H) \cup \rho(H))$
and hence the index formula \eqref{indi} holds. 
\end{theorem}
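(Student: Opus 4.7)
The plan is to reduce (i) and (ii) to a single trace identity relating the resolvents of $H$ and $H_0$ to the Birman--Schwinger operator, and to reduce (iii) and (iv) to the meromorphic Fredholm theorem applied to $I_\cK - K(\cdot)$. The master identity to establish is
\begin{equation*}
\tr_\cH(R(z) - R_0(z)) = \tr_\cK\big([I_\cK - K(z)]^{-1} K'(z)\big),
\end{equation*}
valid for $z$ in a punctured neighborhood of $z_0$ where both resolvents and $[I_\cK-K(z)]^{-1}$ exist. I would derive it by starting from \eqref{5.5j} to write $R(z) - R_0(z) = -\overline{R_0(z) V_2^*}[I_\cK - K(z)]^{-1} V_1 R_0(z)$, then taking $\tr_\cH$ and using cyclicity of the trace (as in Lemma \ref{l2.2}) to cycle $V_1 R_0(z)$ past the outer $\overline{R_0(z) V_2^*}$; the derivative formula \eqref{5.11ju} then replaces $V_1 R_0(z)\overline{R_0(z) V_2^*}$ by $-K'(z)$.

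For parts (i) and (ii), integrating the master identity around $C(z_0;\varepsilon)$ and using the Riesz projection identity $\frac{1}{2\pi i}\ointctrclockwise_{C(z_0;\varepsilon)} R(\zeta)\, d\zeta = -P(z_0;H)$ (and analogously for $H_0$) turns the left-hand side into $m_a(z_0;H_0) - m_a(z_0;H)$, with the convention that the multiplicity vanishes in the respective resolvent set. Since $(I_\cK - K(\cdot))' = -K'(\cdot)$, Definition \ref{d4.2} identifies the right-hand side with $-\ind_{C(z_0;\varepsilon)}(I_\cK - K(\cdot))$. This yields \eqref{indi} directly and \eqref{indi0} as the special case $z_0 \in \rho(H_0)$. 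The finite-type clause in (i) then asks me to verify Definition \ref{def3.1} for $I_\cK - K(\cdot)$ at $z_0$: analyticity since $z_0 \in \rho(H_0)$; nonzero kernel by Birman--Schwinger (Theorem \ref{t5.4}); invertibility on the punctured disk since $z_0 \in \sigma_d(H)$ is isolated, again by Birman--Schwinger; and Fredholmness at $z_0$ supplied by the additional assumption $I_\cK - K(z) \in \Phi(\cK)$ for $z \in \rho(H_0)$ introduced between Theorem \ref{t5.4} and Theorem \ref{t5.5}. Combining \eqref{coincide} with Theorem \ref{t3.4a} then delivers \eqref{indinu}.

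For parts (iii) and (iv), it suffices to show $z_0 \in \sigma_d(H) \cup \rho(H)$, after which (ii) applies (reading $m_a(z_0;H)=0$ when $z_0 \in \rho(H)$). Under the added compactness $K(z) \in \cB_\infty(\cK)$, every $I_\cK - K(z)$ is automatically Fredholm of index zero, and the finite-rank principal part of $R_0(\cdot)$ at $z_0 \in \sigma_d(H_0)$ (inherited from the finite-dimensional Riesz projection of $H_0$) propagates through $V_1 R_0(\cdot) V_2^*$, placing $I_\cK - K(\cdot)$ in the framework of Hypothesis \ref{h4.3}. I would then invoke the meromorphic Fredholm theorem, Theorem \ref{t3.2}. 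In (iii), the connectedness/Hypothesis \ref{h5.1}(iii) assumption supplies a point of bounded invertibility of $I_\cK - K$ in each connected component of $\rho(H_0)$, ruling out alternative (i) of Theorem \ref{t3.2}; in (iv), Birman--Schwinger and the local isolation of $z_0$ in $\sigma(H)$ give invertibility on a punctured disk around $z_0$, locally ruling out alternative (i). Either way, $[I_\cK - K(\cdot)]^{-1}$ is finitely meromorphic at $z_0$, so $z_0$ is either a regular point (whence $z_0 \in \rho(H)$) or a pole (whence $z_0 \in \sigma_d(H)$ by Birman--Schwinger).

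The main obstacle is the Fredholm bookkeeping rather than the trace computation, which is essentially mechanical once \eqref{5.11ju} is in hand. Specifically, in (i) the delicate point is justifying $I_\cK - K(z_0) \in \Phi(\cK)$ so that Definition \ref{def3.1} is meaningful, which requires importing the Fredholm assumption on $I_\cK - K(\cdot)$ from the paragraph preceding the theorem; in (iii) and (iv) the analogous obstacle is verifying finite meromorphy of $K(\cdot)$ at $z_0 \in \sigma_d(H_0)$, which rests on ensuring that the containments $\dom(V_1) \supseteq \dom(H_0)$ and $\dom(V_2) \supseteq \dom(H_0^*)$ are compatible with the finite-rank Riesz projection of $H_0$ at $z_0$ so that $V_1 (\dott) V_2^*$ respects the finite-rank structure of the principal part.
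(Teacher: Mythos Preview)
Your trace computation for (i) and (ii) is essentially the paper's: combine \eqref{5.11ju}, \eqref{5.5j}, and cyclicity. One caution on phrasing: your pointwise ``master identity'' $\tr_\cH(R(z)-R_0(z)) = \tr_\cK\big([I_\cK-K(z)]^{-1}K'(z)\big)$ need not make sense, since Hypothesis~\ref{h5.1} does not force $R(z)-R_0(z)\in\cB_1(\cH)$ for individual $z$. The paper (and Lemma~\ref{l2.2}, which you correctly cite) applies the trace and cyclicity only \emph{after} the contour integration, where finite meromorphy puts the integrals into $\cF(\cH)$ and $\cF(\cK)$. Your approach to (iii) and (iv) via compactness and the meromorphic Fredholm theorem also matches the paper.

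The genuine gap is in the finite-type clause of (i). You claim Fredholmness of $I_\cK - K(z_0)$ is ``supplied by the additional assumption $I_\cK - K(z) \in \Phi(\cK)$ \dots\ introduced between Theorem~\ref{t5.4} and Theorem~\ref{t5.5}.'' But that paragraph is a conditional aside (``If, in addition to Hypothesis~\ref{h5.1}, it is assumed that\dots''), not a standing hypothesis; Theorem~\ref{t5.5} assumes only Hypothesis~\ref{h5.1}, which contains no Fredholm condition on $I_\cK-K(\cdot)$. You must therefore \emph{prove} $I_\cK - K(z_0) \in \Phi(\cK)$. The paper's device is to bootstrap through the inverse: from \eqref{5.12j} (equivalently \eqref{5.10f}) and the Laurent structure of $(H-zI_\cH)^{-1}$ at $z_0\in\sigma_d(H)$, the function $[I_\cK - K(\cdot)]^{-1}$ is finitely meromorphic near $z_0$ with Fredholm zero-order coefficient, so Hypothesis~\ref{h4.3} holds for the \emph{inverse} function. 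Applying Theorem~\ref{t3.2} to $[I_\cK - K(\cdot)]^{-1}$ and reading off \eqref{jussi4fred} then yields that the zero-order term of \emph{its} inverse---which is $I_\cK - K(z_0)$---lies in $\Phi(\cK)$. This bootstrap through the meromorphic Fredholm theorem is the idea missing from your plan.
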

\begin{proof}
Observe first that by the assumptions in $(i)$ and $(ii)$ there exists $\varepsilon_0>0$ such that 
the punctured disc $D(z_0;\varepsilon_0) \backslash \{z_0\}$ is contained in $\rho(H)\cap\rho(H_0)$. Fix a point $z_2\in\rho(H)\cap\rho(H_0)$ and recall from Lemma~\ref{l5.3}\,$(i)$  
that 
\begin{equation}\label{useful}
 K(z)=K(z_2)+(z_2-z)V_1(H_0-zI_{\cH})^{-1}\overline{R_0(z_2)V_2^*}
\end{equation}
holds for all $z\in D(z_0;\varepsilon_0)$ if $z_0\in\rho(H_0)$ and for all $z\in D(z_0;\varepsilon_0) \backslash \{z_0\}$ if $z_0\in\sigma_d(H_0)$. Therefore, 
since $(H_0-z I_{\cH})^{-1}$ is analytic on $D(z_0;\varepsilon_0)$ if $z_0\in\rho(H_0)$ and finitely meromorphic if $z_0\in\sigma_d(H_0)$ (see, e.g., 
\cite[Chapter 1, $\S$2. Theorem 2.1 and (2.3)]{GK69} or \cite{Ka80}) it follows from
\eqref{useful} and $V_1(H_0-z I_{\cH})^{-1}\in\cB(\cH,\cK)$ that the same is true for the functions $K(\cdot)$ and $I_\cK-K(\cdot)$. The same argument using the resolvent of $H$ and formula \eqref{5.12j} in Lemma~\ref{l5.3} shows that the function $[I_\cK - K(\cdot)]^{-1}$ is analytic on the punctured disc $D(z_0;\varepsilon_0) \backslash \{z_0\}$ and 
finitely meromorphic on  $D(z_0; \varepsilon_0)$. Hence the index of $I_\cK-K(\cdot)$ with respect to the counterclockwise oriented circle $C(z_0,\varepsilon)$,
$0<\varepsilon<\varepsilon_0$, is well-defined and we compute with the help of \eqref{5.11ju}, the cyclicity of the trace, and \eqref{5.5j}
 \begin{align} 
 \ind_{C(z_0; \varepsilon)} \big(I_\cK - K(\cdot)\big) 
& = \tr_\cK\bigg(\f{1}{2 \pi i} \ointctrclockwise_{C(z_0; \varepsilon)} d\zeta \, 
[I_\cK - K(\zeta)]^{-1} \bigl(-K^\prime(\zeta)\bigr) \bigg)     \no \\
& = \f{1}{2 \pi i} \tr_\cK\bigg(\ointctrclockwise_{C(z_0; \varepsilon)} d\zeta \, 
[I_\cK - K(\zeta)]^{-1} V_1 R_0(\zeta)\overline{R_0(\zeta)V_2^*} \bigg)    \no \\
& = \f{1}{2 \pi i} \tr_{\cH}\bigg(\ointctrclockwise_{C(z_0; \varepsilon)} d\zeta \, 
\overline{R_0(\zeta)V_2^*} [I_\cK - K(\zeta)]^{-1} V_1 R_0(\zeta) \bigg)     \no \\
& = \tr_{\cH}\bigg(\f{-1}{2 \pi i} \ointctrclockwise_{C(z_0; \varepsilon)} d\zeta \, 
\bigl(R(\zeta)-R_0(\zeta)\bigr) \bigg)      \no \\
&=m_a(z_0; H) - m_a(z_0; H_0).     \lb{5.24} 
\end{align}
Here the third equality in \eqref{5.24} follows in analogy to \eqref{2.5j} 
(cf.\ \cite[Proposition~4.2.2]{GL09}), and the last equality holds if 
$z_0\in\sigma_d(H_0)$. This proves the index formula \eqref{indi}. 
Clearly, if $z_0\in\rho(H_0)$ then 
\begin{equation}
\tr_{\cH}\bigg(\f{-1}{2 \pi i} \ointctrclockwise_{C(z_0; \varepsilon)} d\zeta \, R_0(\zeta) \bigg) =0
\end{equation}
and hence the term $m_a(z_0; H_0)$ is absent in the above computation; this implies the index formula \eqref{indi0}. 

Next, we show that $z_0\in\rho(H_0)\cap\sigma_d(H)$ is a zero of finite-type of $I_{\cK} - K(\cdot)$; the first equality in \eqref{indinu} then follows
from Theorem~\ref{t3.4a} and the second equality is clear by \eqref{coincide}. In order to see that $z_0$ is a zero of finite-type recall that
$I_\cK-K(z)$ is boundedly invertible for all $z\in D(z_0;\varepsilon_0) \backslash \{z_0\}$ and that $z_0\in\sigma_d(H)$ and Theorem~\ref{t5.4} imply 
$\dim(\ker(I_{\cK}-K(z_0)))<\infty$. Moreover, the function $[I_\cK - K(\cdot)]^{-1}$ is finitely meromorphic on  $D(z_0; \varepsilon_0)$ 
and it follows from the particular form of the Laurent series of $(H-zI_{\cH})^{-1}$ in a neighborhood of $z_0\in\sigma_d(H)$
(see, e.g., \cite[Chapter 1, $\S$2. Theorem 2.1 and (2.3)]{GK69}) and 
Lemma~\ref{l5.3}\,$(ii)$ that the zero order coefficient of the Laurent series
of $[I_\cK - K(\cdot)]^{-1}$ in a neighborhood of $z_0$ is a Fredholm operator, that is, Hypothesis~\ref{h4.3} is satisfied for $[I_\cK - K(\cdot)]^{-1}$. 
Hence, Theorem~\ref{t3.2} applies to the function $[I_{\cK}-K(\cdot)]^{-1}$ and from \eqref{jussi4fred} we obtain that $I_{\cK}-K(z_0)$ is a Fredholm operator.
Summing up we have shown that $z_0$ is a zero of finite-type of $I_{\cK} - K(\cdot)$.

We turn to a discussion of item $(iii)$. If $z_0 \in \rho(H)$, no proof is required and the index formula (5.23) takes the form
\begin{equation}\label{indiA}
\ind_{C(z_0; \varepsilon)}(I_\cK- K(\cdot)) = - m_a(z_0;H_0).
\end{equation} 
So we focus 
on $z_0 \in \sigma(H)$.\ From the outset it is clear that for $0 < \varepsilon_0$ sufficiently 
small, $\ol{R_0(z) V_2^*}$, $V_1 R_0(z)$, and $K(z)$ are analytic on 
$D(z_0; \varepsilon_0) \backslash \{z_0\}$, and $K(z)$ is finitely meromorphic on $D(z_0;\varepsilon_0)$.\ 
In particular, $K(z)$, $z \in D(z_0; \varepsilon_0) \backslash \{z_0\}$, is of the form,
\begin{equation}\label{5.26f}
K(z) = \sum_{k=-N_0}^{\infty} (z-z_0)^k K_k(z_0), \quad 
0 < |z - z_0| < \varepsilon_0,   
\end{equation}
for some $N_0 \in \bbN$, with $K_k(z_0) \in \cF(\cK)$, $-N_0 \leq k \leq -1$, 
$K_k(z_0) \in \cB(\cK)$, $k \in \bbN_0$. Hence, 
\begin{equation}
\bigg[\sum_{k=0}^{\infty} (z-z_0)^k K_k(z_0)\bigg] \in \cB_{\infty}(\cK), \quad 
0 < |z - z_0| < \varepsilon_0,   
\end{equation} 
implying that the norm limit,  
\begin{equation}
K_0(z_0) = \lim_{z \to z_0} 
\bigg[\sum_{k=0}^{\infty} (z-z_0)^k K_k(z_0)\bigg] \in \cB_{\infty}(\cK),  
\end{equation} 
exists and is compact. In particular, this implies 
\begin{equation}
[I_{\cK} - K_0(z_0)] \in \Phi(\cK).     \lb{5.29f}
\end{equation}
If $\rho(H_0)$ is connected then Hypothesis~\ref{h5.1}\,$(iii)$ and $K(z)\in\cB_{\infty}(\cK)$, 
$z\in\rho(H_0)$, imply that $I_{\cK} - K(z)$ is boundedly invertible for some 
$z\in D(z_0; \varepsilon_0) \backslash \{z_0\}$.
If $\rho(H_0)$ is not connected then the assumption  $1 \in \rho(K(\zeta))$ for some 
$\zeta \in \bbC$ in each of the connected components of $\rho(H_0)$ implies in the same way that
$I_{\cK} - K(z)$ is boundedly invertible for some $z\in D(z_0; \varepsilon_0) \backslash \{z_0\}$.
Consequently, Theorems \ref{t2.4}\,$(ii)$, respectively, \ref{t3.2}\,$(ii)$, apply, and hence $[I_{\cK} - K(z)]^{-1}$ is analytic 
on $D(z_0; \varepsilon_0) \backslash \{z_0\}$, respectively, finitely meromorphic on $D(z_0;\varepsilon_0)$ (possibly, upon further diminishing $\varepsilon_0 > 0$). By \eqref{5.5j}, 
then also $R(z)$ is analytic on $D(z_0; \varepsilon_0) \backslash \{z_0\}$ and finitely meromorphic 
on $D(z_0;\varepsilon_0)$, implying $z_0 \in \sigma_d(H)$. 

Finally, we briefly turn to item $(iv)$ again assuming $z_0 \in \sigma(H)$ without loss of generality. 
By \eqref{5.10f}, the condition $(D(z_0; \varepsilon_0)\setminus\{z_0\}) \cap \sigma(H) = \emptyset$ guarantees the bounded invertibility of $I_{\cK} - K(z)$ for 
$z \in D(z_0; \varepsilon_0) \backslash \{z_0\}$ and one can now basically follow the proof of item 
$(iii)$; we omit the details. 
\end{proof}

\begin{remark} \lb{r5.6}
In connection with Theorem \ref{t5.5}\,$(iii)$, one notes that since $\rho(H_0) \subset \bbC$ is 
open, its connected components are open and at most countable (see, e.g., 
\cite[Theorem~II.2.9]{Co78}). In particular, in the important special case where 
$\sigma(H_0) \subseteq \bbR$, there are at most two components and in quantum mechanical applications associated with short-range potential coefficients one frequently encounters that 
\begin{equation} 
\lim_{y \to \pm \infty} \|K(iy)\|_{\cB(\cK)} = 0,  
\end{equation}
and hence the condition $1 \in \rho(K(\zeta))$ 
is obviously satisfied for $\zeta = iy$ with $0 < |y|$ sufficiently large. 

In this context we note that condition \eqref{5.29f}, that is, $[I_{\cK} - K_0(z_0)] \in \Phi(\cK)$, was inadvertently omitted in \cite[Theorem~5.5]{GHN15} and hence needs to be added to its 
hypotheses. \hfill $\diamond$
\end{remark}

\section{An Index Formula for the Weyl--Titchmarsh Function Associated to Closed 
Extensions of Dual Pairs}  \lb{s6} 

In this section we derive the index associated with the Weyl--Titchmarsh function associated 
to closed extensions of dual pairs of operators.

Let $\cK$ be a separable, complex Hilbert space with scalar product $(\cdot,\cdot)_\cK$, 
and let $A$ and $B$ be densely defined, closed, linear operators in $\cK$ such that
\begin{equation}\label{6.1j}
 (Bf,g)_\cK=(f,Ag)_\cK,\quad f\in\dom(B), \; g\in\dom(A).
\end{equation}
A pair of operators $\{A,B\}$ that satisfies \eqref{6.1j} is called a {\it dual pair}. It follows immediately from \eqref{6.1j} that
\begin{equation}\label{6.1jjj}
 A\subset B^* \, \text{ and } \, B\subset A^*.
\end{equation}

We recall the notion of a boundary triple for a dual pair from \cite{MM02} (see also \cite{MM03}, \cite{MMH12}).

\begin{definition}\label{defdef}
Let $\{A,B\}$ be a dual pair of operators in $\cK$. A triple $\{\cH,\Gamma^B,\Gamma^A\}$, 
where $\cH=\cH_0\oplus\cH_1$ is  a 
Hilbert space and 
\begin{equation}\label{6.3j}
\Gamma^B=\big(\Gamma_0^B,\Gamma_1^B\big)^\top:\dom (B^*)\rightarrow \cH_0\oplus\cH_1
\end{equation}
and 
\begin{equation}\label{6.3jj}
\Gamma^A=\big(\Gamma_0^A,\Gamma_1^A\big)^\top:\dom (A^*)\rightarrow \cH_1\oplus\cH_0,
\end{equation}
are linear mappings, is called a {\em boundary triple} for the dual pair $\{A,B\}$ if the following 
items $(i)$--$(ii)$ hold: \\[1mm] 
$(i)$ For all $f\in\dom(B^*)$ and $g\in\dom(A^*)$, the following abstract Green's identity 
\hspace*{4mm} holds, 
 \begin{equation}\label{6.4j}
  (B^*f,g)_\cK-(f,A^*g)_\cK= \big(\Gamma_1^B f,\Gamma_0^A g\big)_{\cH_1} 
  - (\Gamma_0^B f,\Gamma_1^A g)_{\cH_0}.
 \end{equation}
$(ii)$ The mappings $\Gamma^B$ and $\Gamma^A$  in \eqref{6.3j} and \eqref{6.3jj} are both onto.
\end{definition}

Next, assume that $\{A,B\}$ is a dual pair of operators in $\cK$ and that $\{\cH,\Gamma^B,\Gamma^A\}$, $\cH=\cH_0\oplus\cH_1$, is a boundary triple for $\{A,B\}$.
Then one has 
\begin{equation}\label{6.5j}
 A=B^*\upharpoonright\ker\big(\Gamma^B\big) \, \text{ and } \, 
 B=A^*\upharpoonright\ker\big(\Gamma^A\big),
\end{equation}
and the mappings in \eqref{6.3j} and \eqref{6.3jj} are continuous when $\dom (B^*)$ and $\dom (A^*)$ are equipped with the graph norm. Moreover,
the closed operators
\begin{equation}\label{6.6j}
\begin{split}
 A_0=B^*\upharpoonright\ker\big(\Gamma_0^B\big) & \, \text{ and } \, 
 A_1=B^*\upharpoonright\ker\big(\Gamma_1^B\big), \\
 B_0=A^*\upharpoonright\ker\big(\Gamma_0^A\big) & \, \text{ and } \, 
 B_1=A^*\upharpoonright\ker\big(\Gamma_1^A\big),
 \end{split}
 \end{equation}
satisfy $B_0=A_0^*$ and $B_1=A_1^*$, and 
\begin{equation}\label{6.6jjj}
A\subset A_0, \; A_1\subset B^* \, \text{ and } \, B\subset B_0, \; B_1\subset A^*.
\end{equation}
More generally,
with the help of a boundary triple for the dual pair $\{A,B\}$ one can describe all closed extensions $A_\Theta$ of $A$ that are restrictions of $B^*$,
that is,
\begin{equation}\label{6.8j}
 A\subset A_\Theta\subset B^*
\end{equation}
with the help of closed linear subspaces $\Theta$ in $\cH_0\times\cH_1$. 
We refer the reader to \cite{MM02} and \cite{MM03} for more details and concentrate
on the special case of extensions of $A$ of the form
\begin{equation}\label{6.9j}
 A_\Theta=B^*\upharpoonright\ker\bigl(\Gamma_1^B-\Theta\Gamma_0^B\bigr),
\end{equation}
where we assume that $\Theta\in\cB(\cH_0,\cH_1)$ is a bounded operator from $\cH_0$ 
into $\cH_1$.

In order to state our main result in this context some more definitions are necessary. First, we recall 
the notion of {\it $\gamma$-field} and {\it Weyl--Titchmarsh function} associated to a boundary triple for a dual pair treated in \cite{MM02} and \cite{MM03}.
Suppose that $\rho(A_0)\not=\emptyset$, $\rho(B_0)\not=\emptyset$, and observe that the direct sum decompositions
\begin{equation}\label{6.10j}
 \dom (B^*)=\dom (A_0)\,\dot+\,\ker (B^*- z I_{\cK}),\quad z \in\rho(A_0),
 \end{equation}
 and
 \begin{equation}\label{6.11j}
 \dom (A^*)=\dom (B_0)\,\dot+\,\ker (A^*-z' I_{\cK}),\quad z' \in\rho(B_0),
\end{equation}
hold. Since 
\begin{equation}\label{6.12j}
\dom(A_0)=\ker\big(\Gamma_0^B\big), \quad \dom(B_0)=\ker\big(\Gamma_0^A\big), 
\end{equation}
it follows from \eqref{6.10j} that the mapping $\Gamma_0^B$
is invertible on $\ker (B^*- z I_{\cK})$, and it follows from \eqref{6.11j} that the mapping $\Gamma_0^A$
is invertible on $\ker (A^*- z' I_{\cK})$.

\begin{definition}
Let $\{A,B\}$ be a dual pair of operators in $\cK$ and let $\{\cH,\Gamma^B,\Gamma^A\}$ be a boundary triple.
The {\em $\gamma$-fields} $\gamma(\cdot)$ and $\gamma_*(\cdot)$ associated to  $\{\cH,\Gamma^B,\Gamma^A\}$ are defined by
\begin{equation}\label{6.13j}
 \gamma(z)=\bigl(\Gamma_0^B\upharpoonright\ker(B^*-z I_{\cK})\bigr)^{-1},\quad z\in\rho(A_0),
 \end{equation}
 and
 \begin{equation}\label{6.14j}
 \gamma_*(z')=\bigl(\Gamma_0^A\upharpoonright\ker(A^*- z' I_{\cK})\bigr)^{-1},\quad z'\in\rho(B_0),
\end{equation}
respectively. The {\em Weyl--Titchmarsh function} $M(\cdot)$ associated to  $\{\cH,\Gamma^B,\Gamma^A\}$ is defined by
\begin{equation}\label{6.15j}
 M(z)=\Gamma_1^B \bigl(\Gamma_0^B\upharpoonright\ker(B^*-z I_{\cK})\bigr)^{-1},\quad\ z\in\rho(A_0).
\end{equation}
\end{definition}

It is important to note that the $\gamma$-field satisfies
\begin{equation}\label{6.16j}
 \gamma(z_1)=\bigl(I_\cK+(z_1 - z_2)(A_0 - z_1 I_{\cK})^{-1}\bigr)\gamma(z_2),
 \quad z_j \in \rho(A_0), \; j=1,2.
\end{equation}
Moreover, the values $M(z)$ of the Weyl--Titchmarsh function are bounded operators from $\cH_0$ to $\cH_1$,
\begin{equation}\label{6.17j}
 M(z)\in\cB(\cH_0,\cH_1), \quad z \in \rho(A_0), 
\end{equation}
and the Weyl--Titchmarsh function and the $\gamma$-fields are related via
\begin{equation}\label{6.18j}
 M(z_1)-M(z_2)=(z_1 - z_2)\gamma_*(\overline{z_2})^*\gamma(z_1), 
 \quad  z_j \in \rho(A_0), \; j=1,2.
\end{equation}

We shall assume from now on that $\{A,B\}$ is a dual pair and $\{\cH,\Gamma^B,\Gamma^A\}$ 
is a boundary triple with the additional property $\cH_0=\cH_1$, which can be viewed as a  
non-symmetric analog of the case of equal deficiency indices of an underlying symmetric operator. 
Consider a closed extension $A_\Theta$ of $A$ as in \eqref{6.9j} with $\Theta\in\cB(\cH_0)$, and assume that $z \in \rho(A_0)$.
Then by \cite[Proposition 5.2]{MM02} one has 
\begin{equation}\label{6.19j}
 z \in \sigma_p(A_\Theta) \, \text{ if and only if } \, 0\in\sigma_p(\Theta-M(z)), 
\end{equation}
and
\begin{equation}\label{6.19j2}
 z \in \rho(A_\Theta) \, \text{ if and only if } \, 0\in\rho(\Theta-M(z)). 
\end{equation}
Moreover, for all $z \in \rho(A_0) \cap \rho(A_\Theta)$, the following Krein-type resolvent 
formula holds, 
\begin{equation}\label{6.20j}
(A_{\Theta} - z I_{\cK})^{-1} = (A_0 - z I_{\cK})^{-1} +\gamma(z)  
[\Theta - M(z)]^{-1} \gamma_*(\overline{z})^*.
\end{equation}

The next lemma will be useful in the proof of our main result Theorem~\ref{t6.4} below (cf.\  \cite[Corollary 4.9]{MM02}).
For the convenience of the reader we provide a simple direct proof in the present situation.

\begin{lemma}\label{lem6.3}
Let $\{A,B\}$ be a dual pair of operators in $\cK$, let $\{\cH,\Gamma^B,\Gamma^A\}$ be a boundary triple 
with $ A_0=B^*\upharpoonright\ker(\Gamma_0^B)$ and Weyl--Titchmarsh function $M(\cdot)$, and assume that $\cH_0=\cH_1$. 
Suppose that $\Theta\in\cB(\cH_0)$ and let $A_\Theta$ be defined as in \eqref{6.9j}. Then $\{\cH,\Gamma^{B,\Theta},\Gamma^{A,\Theta}\}$, where
\begin{equation}\label{6.21jjj}
\Gamma^{B,\Theta}=\begin{pmatrix} \Gamma_0^{B,\Theta} \\[1mm] \Gamma_1^{B,\Theta}\end{pmatrix},\quad \Gamma_0^{B,\Theta}=\Gamma_1^B-\Theta\Gamma_0^B,\quad \Gamma_1^{B,\Theta}=-\Gamma_0^B,
\end{equation}
and
\begin{equation}\label{6.21ajjj}
\Gamma^{A,\Theta}=\begin{pmatrix} \Gamma_0^{A,\Theta} \\[1mm] \Gamma_1^{A,\Theta}\end{pmatrix},\quad
\Gamma_0^{A,\Theta}=\Gamma_1^A-\Theta^*\Gamma_0^A,\quad \Gamma_1^{A,\Theta}=-\Gamma_0^A,
\end{equation}
is a boundary triple for the dual pair $\{A,B\}$ with 
$A_\Theta=B^*\upharpoonright\ker\big(\Gamma_0^{B,\Theta}\big)$.~The corresponding 
Weyl--Titchmarsh function $M_\Theta(\cdot)$ is given by
\begin{equation}\label{mt}
 M_\Theta(z)=\bigl(\Theta-M(z)\bigr)^{-1}, \quad z \in \rho(A_\Theta)\cap\rho(A_0).
\end{equation}
\end{lemma}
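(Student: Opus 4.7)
The plan is to verify the two axioms of a boundary triple for the new maps $\Gamma^{B,\Theta},\Gamma^{A,\Theta}$, then identify $B^*\upharpoonright\ker(\Gamma_0^{B,\Theta})$ with $A_\Theta$, and finally compute $M_\Theta(\cdot)$ directly from the definition using the relation $\Gamma_1^B f_z = M(z)\Gamma_0^B f_z$ for $f_z \in \ker(B^*-z I_\cK)$.

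First I would verify Green's identity \eqref{6.4j} for the new triple. Substituting the definitions in \eqref{6.21jjj} and \eqref{6.21ajjj}, one expands
\begin{align*}
&\bigl(\Gamma_1^{B,\Theta} f,\Gamma_0^{A,\Theta} g\bigr)_{\cH_0}-\bigl(\Gamma_0^{B,\Theta} f,\Gamma_1^{A,\Theta} g\bigr)_{\cH_0} \\
&\quad = \bigl(-\Gamma_0^B f,\Gamma_1^A g-\Theta^*\Gamma_0^A g\bigr)_{\cH_0} - \bigl(\Gamma_1^B f-\Theta\Gamma_0^B f,-\Gamma_0^A g\bigr)_{\cH_0},
\end{align*}
and the two $\Theta$-terms cancel by the identity $(\Gamma_0^B f,\Theta^*\Gamma_0^A g)_{\cH_0}=(\Theta\Gamma_0^B f,\Gamma_0^A g)_{\cH_0}$. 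What remains is $(\Gamma_1^B f,\Gamma_0^A g)_{\cH_0}-(\Gamma_0^B f,\Gamma_1^A g)_{\cH_0}$, which equals $(B^*f,g)_\cK-(f,A^*g)_\cK$ by Green's identity for the original triple. Surjectivity of $\Gamma^{B,\Theta}$ and $\Gamma^{A,\Theta}$ is equally direct: given $(h_0,h_1)\in\cH_0\oplus\cH_0$, the preimage under $\Gamma^{B,\Theta}$ is any $f\in\dom(B^*)$ with $\Gamma_0^B f=-h_1$ and $\Gamma_1^B f=h_0-\Theta h_1$, whose existence is guaranteed by surjectivity of $\Gamma^B$; the argument for $\Gamma^{A,\Theta}$ is analogous. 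The identification $A_\Theta=B^*\upharpoonright\ker(\Gamma_0^{B,\Theta})$ is then immediate from the defining relation \eqref{6.9j}.

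For the Weyl--Titchmarsh formula \eqref{mt}, fix $z\in\rho(A_\Theta)\cap\rho(A_0)$ and $f_z\in\ker(B^*-z I_\cK)$. By the defining property of $M(\cdot)$ in \eqref{6.15j} one has $\Gamma_1^B f_z=M(z)\Gamma_0^B f_z$, hence
\begin{equation*}
\Gamma_0^{B,\Theta} f_z = \Gamma_1^B f_z-\Theta\Gamma_0^B f_z = -\bigl(\Theta-M(z)\bigr)\Gamma_0^B f_z.
\end{equation*}
Since $z\in\rho(A_\Theta)$, the operator $\Theta-M(z)$ is boundedly invertible by \eqref{6.19j2}, so the restriction $\Gamma_0^{B,\Theta}\upharpoonright\ker(B^*-z I_\cK)$ is a bijection onto $\cH_0$. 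Given $\psi\in\cH_0$, the preimage $f_z$ satisfies $\Gamma_0^B f_z=-(\Theta-M(z))^{-1}\psi$, and therefore
\begin{equation*}
M_\Theta(z)\psi = \Gamma_1^{B,\Theta} f_z = -\Gamma_0^B f_z = \bigl(\Theta-M(z)\bigr)^{-1}\psi,
\end{equation*}
which yields \eqref{mt}.

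The arguments are essentially bookkeeping: no deep obstacle is expected. The only point where one must be attentive is the placement of $\Theta$ versus $\Theta^*$ and the minus signs in \eqref{6.21jjj}--\eqref{6.21ajjj}, which are precisely calibrated so that Green's identity holds without leftover terms and so that the new $\Gamma_0^{B,\Theta}$ inverts to produce $(\Theta-M(z))^{-1}$ rather than $(M(z)-\Theta)^{-1}$.
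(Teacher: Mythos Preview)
Your proof is correct and follows essentially the same approach as the paper. The only minor differences are that the paper packages the surjectivity argument via invertible $2\times 2$ block matrices $W_B^\Theta$, $W_A^{\Theta^*}$ rather than explicit preimages, and that the paper re-derives the invertibility of $\Theta-M(z)$ from scratch whereas you (legitimately) cite the already-stated \eqref{6.19j2}.
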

\begin{proof}
Let $f\in\dom(B^*)$ and $g\in\dom(A^*)$. Then it follows with the help of the abstract Green's identity \eqref{6.4j} for the boundary triple
$\{\cH,\Gamma^B,\Gamma^A\}$ that
\begin{equation}\label{btcomp}
\begin{split}
& \bigl(\Gamma_1^{B,\Theta}f,\Gamma_0^{A,\Theta}g\bigr)_{\cH_0}- \bigl(\Gamma_0^{B,\Theta}f,\Gamma_1^{A,\Theta}g\bigr)_{\cH_0}\\
& \quad =\bigl(-\Gamma_0^Bf,\Gamma_1^Ag-\Theta^*\Gamma_0^Ag\bigr)_{\cH_0}-\bigl(\Gamma_1^Bf-\Theta \Gamma_0^Bf,-\Gamma_0^Ag\bigr)_{\cH_0}\\
& \quad = \big(\Gamma_1^B f,\Gamma_0^A g\big)_{\cH_0} 
- \big(\Gamma_0^B f,\Gamma_1^A g\big)_{\cH_0}\\
& \quad = (B^*f,g)_\cK-(f,A^*g)_\cK,
\end{split}
\end{equation}
and hence the triple $\{\cH,\Gamma^{B,\Theta},\Gamma^{A,\Theta}\}$ satisfies the abstract Green's identity in Definition~\ref{defdef}\,$(i)$.
Moreover, as
\begin{equation}\label{jaok}
 \begin{pmatrix}
 \Gamma_0^{B,\Theta}\\[1mm] \Gamma_1^{B,\Theta} 
 \end{pmatrix}
=W_B^\Theta
\begin{pmatrix}
 \Gamma_0^{B}\\[1mm] \Gamma_1^{B} 
 \end{pmatrix},\quad W_B^\Theta=\begin{pmatrix} 
  -\Theta & I_{\cH_0} \\ -I_{\cH_0} & 0
 \end{pmatrix},
\end{equation}
and
\begin{equation}\label{jaok2}
 \begin{pmatrix}
 \Gamma_0^{A,\Theta}\\[1mm] \Gamma_1^{A,\Theta} 
 \end{pmatrix}
=W_A^{\Theta^*}
\begin{pmatrix}
 \Gamma_0^{A}\\[1mm] \Gamma_1^{A} 
 \end{pmatrix},\quad W_A^{\Theta^*}=\begin{pmatrix} 
  -\Theta^* & I_{\cH_0} \\ -I_{\cH_0} & 0
 \end{pmatrix},
\end{equation}
and the $2\times 2$ block operator matrices $W_B^\Theta$ and $W_A^{\Theta^*}$ in \eqref{jaok} and \eqref{jaok2} are boundedly invertible, it follows that both mappings
\begin{equation}\label{6.3aj}
\Gamma^{B,\Theta}=\big(\Gamma_0^{B,\Theta},\Gamma_1^{B,\Theta}\big)^\top:\dom (B^*)\rightarrow \cH_0\oplus\cH_0
\end{equation}
and 
\begin{equation}\label{6.3ajj}
\Gamma^{A,\Theta}=\big(\Gamma_0^{A,\Theta},\Gamma_1^{A,\Theta}\big)^\top:\dom (A^*)\rightarrow \cH_0\oplus\cH_0,
\end{equation}
are onto. Hence also condition $(ii)$ in Definition~\ref{defdef} holds for $\{\cH,\Gamma^{B,\Theta},\Gamma^{A,\Theta}\}$, and it follows that
$\{\cH,\Gamma^{B,\Theta},\Gamma^{A,\Theta}\}$ is a boundary triple for the dual pair $\{A,B\}$. By construction, one has (cf.\ \eqref{6.9j})
\begin{equation}
 B^*\upharpoonright\ker(\Gamma_0^{B,\Theta})=B^*\upharpoonright\ker\bigl(\Gamma_1^B-\Theta\Gamma_0^B\bigr)=A_\Theta. 
\end{equation}  

Next, it will be verified that the Weyl--Titchmarsh function $M_\Theta(\cdot)$ corresponding to the boundary triple $\{\cH,\Gamma^{B,\Theta},\Gamma^{A,\Theta}\}$
has the form \eqref{mt}. Assume that $f_z \in \ker(B^*- z I_{\cK})$ and that 
$z \in \rho(A_0)\cap\rho(A_\Theta)$. Since $M(\cdot)$ is the Weyl--Titchmarsh function
of the boundary triple $\{\cH,\Gamma^B,\Gamma^A\}$, one has 
$M(z)\Gamma_0^Bf_z = \Gamma_1^B f_z$, and 
hence it follows that
\begin{equation}\label{ayx}
\begin{split}
 [\Theta-M(z)]\Gamma_1^{B,\Theta}f_z 
 &=- [\Theta-M(z)]\Gamma_0^Bf_z \\
 &=-\Theta\Gamma_0^B f_z + \Gamma_1^Bf_z  \\
 &=\Gamma_0^{B,\Theta}f_z.
\end{split}
 \end{equation}
From the direct sum decomposition
\begin{equation}\label{ddd}
\begin{split}
 \dom (B^*)&=\dom(A_\Theta)\,\dot +\,\ker(B^*- z I_{\cK})\\
 &=\ker(\Gamma_0^{B,\Theta})\,\dot +\,\ker(B^*- z I_{\cK}), \quad z \in \rho(A_\Theta),
 \end{split}
\end{equation}
and the fact that $\Gamma_0^{B,\Theta}$ maps onto $\cH_0$ one then concludes together with \eqref{ayx} that $[\Theta-M(z)]$ maps onto $\cH_0$. Moreover, one has 
\begin{equation}\label{kern}
\ker(\Theta-M(z))=\{0\}.
\end{equation}
In fact, if $\Theta\varphi=M(z)\varphi$ for some $\varphi\in\cH_0$, then by \eqref{6.10j} there exists an element $f_z \in \ker(B^*- z I_{\cK})$ such that 
$\Gamma_0^B f_z = \varphi$. This leads to
\begin{equation}
 \Theta\Gamma_0^B f_z = \Theta\varphi=M(z)\varphi=M(z)\Gamma_0^B f_z   
 = \Gamma_1^B f_z, 
\end{equation}
and hence $f_z \in \dom(A_\Theta)\cap\ker(B^*- z I_{\cK})$. Therefore, 
$f_z\in\ker(A_\Theta- z I_{\cK})$, and as $z \in \rho(A_\Theta)$ by assumption, 
we conclude $f_z = 0$ and $\varphi=\Gamma_0^B f_z = 0$. This shows \eqref{kern}.
Now it follows from \eqref{ayx} that
\begin{equation}
[\Theta-M(z)]^{-1}\Gamma_0^{B,\Theta} f_z = \Gamma_1^{B,\Theta}f_z 
\end{equation}
for all $f_z \in \ker(B^*-z I_{\cK})$ and $z \in \rho(A_0)\cap\rho(A_\Theta)$. This finally implies that the 
Weyl--Titchmarsh function $M_\Theta(\cdot)$ has the form \eqref{mt}.
\end{proof}

The next theorem is the main result of this section. As in Lemma~\ref{lem6.3} we shall assume here that the boundary triple
$\{\cH,\Gamma^B,\Gamma^A\}$ has the additional property
$\cH_0=\cH_1$. 

\begin{theorem}\label{t6.4}
Let $\{A,B\}$ be a dual pair of operators in $\cK$, let $\{\cH,\Gamma^B,\Gamma^A\}$ be a boundary triple 
with $ A_0=B^*\upharpoonright\ker(\Gamma_0^B)$ and Weyl--Titchmarsh function $M(\cdot)$, and assume that $\cH_0=\cH_1$. Furthermore, let 
$\Theta\in\cB(\cH_0)$ be a bounded operator and consider the extension 
\begin{equation}
A_\Theta=B^*\upharpoonright\ker\bigl(\Gamma_1^B-\Theta\Gamma_0^B\bigr).
\end{equation}
Then the following assertions $(i)$ and $(ii)$ hold: \\
$(i)$ If $z_0 \in \rho(A_0) \cap \sigma_d(A_\Theta)$, then 
 the index formula
\begin{equation}\label{indi02}
\ind_{C(z_0; \varepsilon)}(\Theta-M(\cdot))=m_a(z_0;A_\Theta)
\end{equation}
holds for $\varepsilon>0$ sufficiently small. Furthermore, 
$z_0$ is a zero of finite-type of the function $\Theta - M(\cdot)$, and hence
\begin{equation}\label{indinu2}
\nu(z_0; \Theta - M(\cdot))=m_a(z_0; \Theta - M(\cdot))=   \ind_{C(z_0; \varepsilon)}(\Theta-M(\cdot)).   
\end{equation} 
$(ii)$ If  $z_0 \in \sigma_d(A_0) \cap \sigma_d(A_\Theta)$,
then the index formula
\begin{equation}\label{indi2}
\ind_{C(z_0; \varepsilon)}(\Theta-M(\cdot))=m_a(z_0;A_\Theta)-m_a(z_0;A_0)
\end{equation}
holds for $\varepsilon>0$ sufficiently small.
\end{theorem}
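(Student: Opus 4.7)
The strategy mirrors the proof of Theorem \ref{t5.5}, with the Krein-type resolvent formula \eqref{6.20j} playing the role that Kato's formula \eqref{5.5j} played there. The first ingredient is the derivative of the Weyl--Titchmarsh function: differentiating \eqref{6.18j} in $z_1$ and letting $z_1 \to z_2 =: z$ (with \eqref{6.16j} controlling the limit of $\gamma$) yields the key identity
\begin{equation*}
M'(z) = \gamma_*(\overline{z})^* \gamma(z), \quad z \in \rho(A_0).
\end{equation*}

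In both cases (i) and (ii), the hypotheses ensure that for some $\varepsilon_0 > 0$ the punctured disk $D(z_0; \varepsilon_0) \setminus \{z_0\}$ lies in $\rho(A_0) \cap \rho(A_\Theta)$. From \eqref{6.16j}--\eqref{6.18j} and the fact that $(A_0 - z I_\cK)^{-1}$ is finitely meromorphic on $D(z_0;\varepsilon_0)$, it follows that $M(\cdot)$, and hence $\Theta - M(\cdot)$, is analytic on the punctured disk and finitely meromorphic on $D(z_0;\varepsilon_0)$. By Lemma \ref{lem6.3}, $[\Theta - M(\cdot)]^{-1}$ coincides with the Weyl--Titchmarsh function $M_\Theta(\cdot)$ of the boundary triple $\{\cH, \Gamma^{B,\Theta}, \Gamma^{A,\Theta}\}$, and applying the analog of \eqref{6.20j} for that triple (which involves the finitely meromorphic $(A_\Theta - zI_\cK)^{-1}$) shows $M_\Theta(\cdot)$ is likewise finitely meromorphic on $D(z_0;\varepsilon_0)$. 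Thus $\ind_{C(z_0;\varepsilon)}(\Theta - M(\cdot))$ from Definition \ref{d4.2} is well defined.

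The core computation proceeds directly from \eqref{4.3j}. Inserting the formula for $M'(\zeta)$, applying cyclicity of the trace in the form of Lemma \ref{l2.2}, and then invoking \eqref{6.20j}, one finds
\begin{align*}
\ind_{C(z_0;\varepsilon)}(\Theta - M(\cdot))
&= -\tr_{\cH_0}\biggl(\frac{1}{2\pi i}\ointctrclockwise_{C(z_0;\varepsilon)} d\zeta\, \gamma_*(\overline{\zeta})^* \gamma(\zeta)[\Theta - M(\zeta)]^{-1}\biggr) \\
&= -\tr_{\cK}\biggl(\frac{1}{2\pi i}\ointctrclockwise_{C(z_0;\varepsilon)} d\zeta\, \gamma(\zeta)[\Theta - M(\zeta)]^{-1}\gamma_*(\overline{\zeta})^*\biggr) \\
&= -\tr_\cK\biggl(\frac{1}{2\pi i}\ointctrclockwise_{C(z_0;\varepsilon)} d\zeta\, \bigl[(A_\Theta - \zeta I_\cK)^{-1} - (A_0 - \zeta I_\cK)^{-1}\bigr]\biggr) \\
&= m_a(z_0; A_\Theta) - m_a(z_0; A_0).
\end{align*}
In case (i) the contour integral of $(A_0 - \zeta I_\cK)^{-1}$ vanishes by analyticity, giving \eqref{indi02}; in case (ii) both terms survive, giving \eqref{indi2}.

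For the remaining claim \eqref{indinu2} in case (i), I would verify that $z_0$ is a zero of finite-type of $\Theta - M(\cdot)$: finite-dimensionality of $\ker(\Theta - M(z_0))$ follows from \eqref{6.19j} together with $z_0 \in \sigma_d(A_\Theta)$; bounded invertibility of $\Theta - M(\cdot)$ on $D(z_0;\varepsilon_0)\setminus\{z_0\}$ was established above; and the Fredholm property of $\Theta - M(z_0)$ is obtained by applying Theorem \ref{t3.2} to $M_\Theta(\cdot) = [\Theta - M(\cdot)]^{-1}$, whose zero-order Laurent coefficient is Fredholm by \eqref{jussi4fred}. Then Theorem \ref{t3.4a} together with \eqref{coincide} produces \eqref{indinu2}. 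I anticipate the main obstacle to be the careful bookkeeping of the finitely-meromorphic Fredholm structure at $z_0$ in case (ii), where $M(\cdot)$ itself has a pole and the trace-cyclicity step must be justified through repeated application of Lemma \ref{l2.2} to keep all contour-integral-valued products in the finite-rank ideal.
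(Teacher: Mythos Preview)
Your proof is correct and follows essentially the same route as the paper: derive $M'(z)=\gamma_*(\bar z)^*\gamma(z)$, establish finite meromorphy of $\Theta-M(\cdot)$ via \eqref{6.16j}--\eqref{6.18j} and of its inverse via Lemma~\ref{lem6.3} (the paper uses the analog of \eqref{6.18j} for $M_\Theta$, namely \eqref{6.21jj}, rather than the Krein formula \eqref{6.20j} you cite), then run the trace computation with \eqref{6.20j}. One small wrinkle: your invocation of \eqref{jussi4fred} for the Fredholm property of the zero-order Laurent coefficient of $M_\Theta$ is the \emph{conclusion} of Theorem~\ref{t3.2}, not its hypothesis; the paper verifies Hypothesis~\ref{h4.3} for $M_\Theta$ independently from \eqref{6.21jj} and the explicit Laurent structure of $(A_\Theta-zI_\cK)^{-1}$ at a discrete eigenvalue (exactly as in the proof of Theorem~\ref{t5.5}), and only then applies Theorem~\ref{t3.2} to conclude that $\Theta-M(z_0)\in\Phi(\cH_0)$.
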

\begin{proof}
Choose $\varepsilon_0>0$ such that $D(z_0;\varepsilon_0) \backslash \{z_0\}\subset\rho(A_0)\cap\rho(A_\Theta)$.
 Then it follows from \eqref{6.16j} and \eqref{6.18j} that the Weyl--Titchmarsh function admits the representation
 \begin{align}\label{6.21j}
 \begin{split} 
  M(z_1) = M(z_2)+(z_1 - z_2)\gamma_*(\overline{z_2})^*\bigl(I_\cK+(z_1 - z_2)
  (A_0- z_1 I_{\cK})^{-1}\bigr)\gamma(z_2),&    \\ 
  z_j \in \rho(A_0), \; j=1,2.& 
  \end{split}
 \end{align}
If $z_0$ is a point in $\rho(A_0)$ then the resolvent $(A_0- z I_{\cK})^{-1}$
 is analytic on a disc $D(z_0; \varepsilon_0)$ with $\varepsilon_0>0$ sufficiently small, and if $z_0$ 
 is a
 discrete eigenvalue of $A_0$ the resolvent $(A_0- z I_{\cK})^{-1}$
 is analytic on a punctured disc $D(z_0; \varepsilon_0) \backslash \{z_0\}$ with $\varepsilon_0>0$ sufficiently small, and finitely meromorphic on the disc $D(z_0; \varepsilon_0)$.
 Hence one concludes from \eqref{6.21j} that in the case $z_0\in\rho(A_0)$ also the 
 Weyl--Titchmarsh function $M(\cdot)$ is analytic on the disc $D(z_0; \varepsilon_0)$,
 and in the case $z_0\in\sigma_d(A_0)$ the Weyl--Titchmarsh function $M(\cdot)$ is analytic on the
 punctured disc $D(z_0; \varepsilon_0) \backslash \{z_0\}$ and 
 finitely meromorphic on $D(z_0; \varepsilon_0)$. 
 It is clear that the same is also true for the function
 \begin{equation}\label{6.22j}
  \Theta-M(\cdot).
 \end{equation}
 Similarly, consider the boundary triple $\{\cH,\Gamma^{B,\Theta},\Gamma^{A,\Theta}\}$ in 
 Lemma~\ref{lem6.3} and the corresponding Weyl--Titchmarsh function 
 \begin{equation}\label{6.22jj}
 M_\Theta(z)= [\Theta-M(z)]^{-1},
 \end{equation}
 where the operators $M_\Theta(z)\in\cB(\cH_0)$
 are well-defined for all $z \in \rho(A_0)\cap\rho(A_\Theta)$.
 If $\gamma_\Theta(\cdot)$ and $\gamma_{*\Theta}(\cdot)$ denote the corresponding 
 $\gamma$-fields, then one has (cf.\ \eqref{6.21j})
 \begin{align}\label{6.21jj} 
& M_\Theta(z_1) = M_\Theta(z_2) + (z_1 - z_2)\gamma_{*\Theta}(\overline{z_2})^*
  \big[I_\cK+(z_1 - z_2)(A_\Theta - z_1 I_{\cK})^{-1}\big]\gamma_\Theta(z_2),    \no \\
& \hspace*{8cm} z_j \in \rho(A_\Theta), \; j=1,2. 
 \end{align}
Since  $z_0\in\sigma_d(A_\Theta)$ by assumption, it follows as above from the properties of the resolvent $(A_\Theta- z I_{\cK})^{-1}$ 
 and \eqref{6.21jj} that the function $M_\Theta(\cdot)$ in \eqref{6.22jj} is finitely meromorphic on the disc $D(z_0; \varepsilon_0)$. 
Furthermore, one obtains from 
\eqref{6.18j} that
\begin{equation}\label{6.23j}
 \frac{d}{dz} [\Theta-M(z)] = - \frac{d}{dz} M(z) = - \gamma_*(\overline{z})^*\gamma(z), 
 \quad z \in \rho(A_0),
\end{equation}
and hence one computes for $0 < \varepsilon<\varepsilon_0$  with the help of \eqref{6.23j} and \eqref{6.20j}, 
\begin{align}
 \ind_{C(z_0; \varepsilon)} \big(\Theta - M(\cdot)\big) 
& = \f{1}{2 \pi i} \tr_{\cH_0} \bigg(\ointctrclockwise_{C(z_0; \varepsilon)} d\zeta \, 
[\Theta - M(\zeta)]^{-1} (-M'(\zeta))  \bigg)    \no \\
&  = \f{- 1}{2 \pi i} \tr_{\cH_0}\bigg(\ointctrclockwise_{C(z_0; \varepsilon)} d\zeta \,  
[\Theta - M(\zeta)]^{-1} \gamma_*(\overline\zeta)^*\gamma(\zeta) \bigg) 
\no \\
&  = \f{- 1}{2 \pi i} \tr_\cK \bigg(\ointctrclockwise_{C(z_0; \varepsilon)} d\zeta \,  
\gamma(\zeta) \, [\Theta - M(\zeta)]^{-1} \gamma_*(\ol{\zeta})^* \bigg) 
\no \\ 
& = \f{- 1}{2 \pi i} \tr_{\cK} \bigg(\ointctrclockwise_{C(z_0; \varepsilon)} d\zeta \, 
\big[(A_{\Theta} - \zeta I)^{-1} - (A_0 - \zeta I)^{-1}\big] \bigg)    \no \\
&  = m_a(z_0; A_{\Theta}) - m_a(z_0; A_0).      \lb{6.46} 
\end{align}
Here the third equality in \eqref{6.46} is again justified in an analogous manner as 
\eqref{2.5j} (cf.\ \cite[Proposition~4.2.2]{GL09}).  
This shows the index formula in assertion $(ii)$. Clearly, if $z_0\in\rho(A_0)$ then the term $m_a(z_0; A_0)$ is absent and hence the index formula reduces to the one in assertion $(i)$.

The same argument as in the proof of Theorem~\ref{t5.5} shows that $z_0\in\rho(A_0)\cap\sigma_d(A_\Theta)$ is a zero of finite-type of the function
$[\Theta- M(\cdot)]$. Then \eqref{indinu2} follows from Theorem~\ref{t3.4a} and \eqref{coincide}.
\end{proof}

We conclude this section with the observation that the far simpler case of Krein-type 
resolvent formulas in terms of boundary data maps for one-dimensional Schr\"odinger and 
Sturm--Liouville operators discussed in \cite{CGM10} and \cite{CGNZ14}, readily yield 
analogous formulas for the index of these boundary data maps in terms of (algebraic) 
multiplicities of eigenvalues. One can follow the computation in the proof of Theorem~\ref{t6.4} 
line by line; we omit further details. 

\medskip
\noindent {\bf Acknowledgments.} 
F.G.\ gratefully acknowledges kind invitations to the Institute for Numerical Mathematics 
at the Graz University of Technology, Austria, and to the Department of Mathematical Sciences 
of the Norwegian University of Science and Technology, Trondheim, for parts of May and June 2015. The extraordinary hospitality by Jussi Behrndt and Helge Holden at each institution, as well as the stimulating atmosphere at both places, are greatly appreciated. 


\end{document}